\newcommand {\df}[1]{\emph {#1}}
\newcommand {\calM}{{\mathcal M}}
\newcommand {\calN}{{\mathcal N}}
\newcommand {\PP}{{\mathbb P}}
\newcommand {\QQ}{{\mathbb Q}}
\newcommand {\RR}{{\mathbb R}}
\newcommand {\ZZ}{{\mathbb Z}}
\newcommand {\rdim}{\textnormal{rdim}}
\newcommand {\vdim}{\textnormal{vdim}}
\DeclareMathOperator {\EV}{\textnormal{EV}}
\DeclareMathOperator {\BB}{\textnormal{B}}
\newcommand {\calZ}{{\mathcal Z}}
\DeclareMathOperator{\ev}{ev}
\DeclareMathOperator{\val}{val}
\DeclareMathOperator{\dist}{dist}
\DeclareMathOperator{\id}{id}
\renewcommand{\phi}{\varphi}
\newcommand{\R}{{\mathbb R}}
\newcommand{\Q}{{\mathbb Q}}
\newcommand {\dunion}{\,\mbox {\raisebox{0.25ex}{$\cdot$} \kern-1.83ex $\cup$}
  \,}
\newcommand {\ie}{i.\,e.\@ifnextchar,{}{\ }}
\newcommand {\eg}{e.\,g.\@ifnextchar,{}{\ }}
\theoremstyle{plain}
\newtheorem{theorem}{Theorem}[section]
\newtheorem{lemma}[theorem]{Lemma}
\newtheorem{corollary}[theorem]{Corollary}
\newtheorem{conjecture}[theorem]{Conjecture}
\theoremstyle{remark}
\newtheorem{construction}[theorem]{Construction}
\newtheorem{notation}[theorem]{Notation}
\newtheorem{example}[theorem]{Example}
\newtheorem{remark}[theorem]{Remark}
\theoremstyle{definition}
\newtheorem{definition}[theorem]{Definition}
\newcommand{\mx}[2]{\tilde\calM_{0}(#1,#2)}
\newcommand{\mm}[2]{\calM_{0}(#1,#2)}
\newcommand{\mmp}[2]{\calM'_{0}(#1,#2)}
\newskip\myparskip
\renewenvironment {enumerate}%
  {\begin {oldenumerate}\parskip\myparskip \itemsep 0mm \parindent 0mm}%
  {\end {oldenumerate}}
\renewenvironment {itemize}%
  {\begin {olditemize}\parskip\myparskip \itemsep 0mm \parindent 0mm}%
  {\end {olditemize}}
\newcommand{\refx}[1]{\ref*{#1}}
\begin{document}

\title[Moduli spaces of curves in tropical varieties]%
  {Moduli spaces of curves in tropical varieties}
\author{Andreas Gathmann and Dennis Ochse}

\address{Andreas Gathmann, Fachbereich Mathematik, Technische Universität
  Kaiserslautern, Postfach 3049, 67653 Kaiserslautern, Germany}
\email{andreas@mathematik.uni-kl.de}

\address{Dennis Ochse, Fachbereich Mathematik, Technische Universität
  Kaiserslautern, Postfach 3049, 67653 Kaiserslautern, Germany}
\email{ochse@mathematik.uni-kl.de}

\thanks{\emph {2010 Mathematics Subject Classification:} 14T05, 14N35, 51M20}
\keywords{Tropical geometry, enumerative geometry, Gromov-Witten theory}

\begin{abstract}
  We describe a framework to construct tropical moduli spaces of rational
  stable maps to a smooth tropical hypersurface or curve. These moduli
  spaces will be tropical cycles of the expected dimension, corresponding to
  virtual fundamental classes in algebraic geometry. As we focus on the
  combinatorial aspect, we take the weights on certain basic $0$-dimensional
  local combinatorial curve types as input data, and give a compatibility
  condition in dimension $1$ to ensure that this input data glues to a global
  well-defined tropical cycle. As an application, we construct such moduli
  spaces for the case of lines in surfaces, and in a subsequent paper for
  stable maps to a curve \cite{GMO}.
\end{abstract}

\maketitle

\section{Introduction}

Moduli spaces of stable maps to a smooth projective variety are one of the most
important tools in modern enumerative geometry \cite{BM96,FP97}. Intersection
theory on these spaces has been used successfully to solve many enumerative
problems, such as \eg determining the numbers of plane curves of fixed genus
and degree through given points, or the numbers of rational curves of fixed
degree in a general quintic threefold \cite{Kon95,CH98}.

In recent times, tropical geometry has also been proven to be very useful for
attacking enumerative problems, starting with Mikhalkin's famous Correspondence
Theorem that provided the first link between such problems in algebraic and
tropical geometry \cite{Mi03}. Accordingly, it is an important goal in tropical
enumerative geometry to construct tropical analogues of moduli spaces of stable
maps, \ie tropical cycles whose points parametrize curves with certain
properties in a given tropical variety. This has been achieved for rational
curves in toric varieties (corresponding to tropical curves in a real vector
space) in \cite{Mik06,GKM07}, and (tropical) intersection theory on these
spaces has been used in many cases to attack and solve enumerative problems
from a purely combinatorial point of view.

Of course, it would be very desirable to have such moduli spaces of tropical
stable maps also for other target varieties. However, there is currently no
general known method to construct such spaces, mainly for the following two
reasons:

\begin{enumerate}
\item Already in algebraic geometry, the moduli spaces of stable maps may have
  bigger dimension than expected from deformation theory. One can solve this
  problem by introducing virtual fundamental classes, \ie cycle classes in the
  moduli spaces which are of the expected dimension and replace the ordinary
  fundamental classes for intersection-theoretic purposes \cite{BF97,Behrend}.
  These classes can usually be constructed as certain Chern classes of vector
  bundles. However, there is no corresponding counterpart of this theory in
  tropical geometry yet.
\item Tropical curves (in the sense of: metric graphs of the given degree and
  genus) in the given tropical variety might not be tropicalizations of actual
  algebraic curves inside the algebraic variety. Consequently, the naive
  tropical moduli space may not capture the situation from algebraic geometry
  appropriately, and it might have too big dimension even if the algebraic
  moduli space does not. This already happens for lines in cubic surfaces:
  Whereas each smooth algebraic cubic contains exactly $27$ lines, there are
  smooth tropical cubics with infinitely many lines on them \cite{Vig10}. In
  general, already this question whether a tropical curve is realizable by an
  algebraic one inside the given ambient variety is an unsolved problem. It is
  known that the space of realizable tropical curves is a polyhedral set
  \cite{Yu15}, but there is currently no explicit way to describe it.
\end{enumerate}

In this paper, we will therefore study these tropical moduli spaces from an
axiomatic and purely combinatorial point of view. Given a smooth tropical
hypersurface or curve $X$, we will describe a set of (essentially
$0$-dimensional) input data and (essentially $1$-dimensional) compatibility
conditions that allow to construct from them tropical moduli spaces $ \mm
X\Sigma $ of rational curves in $X$ that are cycles of the expected dimension
(where the subscript $0$ denotes the genus and $ \Sigma $ the degree of the
curves). We can therefore consider such cycles as tropical analogues of both
the algebraic moduli spaces of stable maps and their virtual fundamental
classes.

A central concept for this construction is the \emph{resolution dimension} $
\rdim(V) $ of a vertex $V$ of a curve in $X$. It is an integer determined by
the local combinatorial type of the curve and $X$ at $V$ that describes the
expected dimension of the moduli space of such local curves when the vertex of
the curve is resolved, modulo the local lineality space of $X$. For example, a
$4$-valent vertex in the plane $ \RR^2 $ has resolution dimension $1$, since it
can be resolved to two $3$-valent vertices in a $1$-dimensional family (modulo
translations in $ \RR^2 $). The origin of the curve in the following picture
has resolution dimension $0$ since this curve piece cannot be resolved or moved
in $X$.

\vspace{2mm plus 2mm}

\begin {center} \input {pics/intro} \end {center}

As a first rule, vertices of negative resolution dimension are not allowed in
our moduli spaces --- in the case of target curves this is known as the
Riemann-Hurwitz condition \cite{BBM10,BM13}. To construct the maximal cells of
$ \mm X\Sigma $ together with their weights, the general idea is then that
splitting and gluing of the curves allows to reduce this question to vertices
of resolution dimension $0$. For example, the picture above on the left shows a
line in a plane in $ \RR^3 $, which can vary in a $2$-dimensional moduli space
by moving its two vertices along the direction of its bounded edge. We can
split the curve in three pieces as in the picture on the right, all of which
have resolution dimension $0$. If we have weights for the moduli spaces for
these three local pieces, we can glue them back together using tropical
intersection theory to obtain a moduli space for our original situation on the
left. Technically, this means that we consider the curve pieces to have bounded
ends, and that we impose the intersection-theoretic condition that
corresponding endpoints map to the same point in $X$ by suitable evaluation
maps.

In this way, we can make $ \mm X\Sigma $ into a weighted polyhedral complex of
the expected dimension by just giving weights for vertices of resolution
dimension $0$ as input data (we will refer to them as \emph{moduli data} in
Definition \ref{def:mod-data}). However, as this input data can a priori be
arbitrary, we need a certain compatibility condition for the resulting
polyhedral complex to be balanced. A central result of our paper is that
checking this condition in resolution dimension $1$ is enough to ensure that
the gluing process then works for all dimensions of the moduli spaces
(Corollaries \ref{cor-balancing} and \ref{cor:good1dim}).

We check this condition for the moduli spaces of lines in surfaces in $ \RR^3
$, leading \eg to a well-defined $0$-dimensional moduli cycle of lines on an
arbitrary smooth tropical cubic surface, even if the actual number of such
lines is infinite. In a particular example from \cite{Vig10} of such a cubic
with infinitely many lines, we verify that this $0$-cycle still has degree $27$
as expected. In a subsequent paper, we use our methods to obtain well-defined
moduli spaces of rational stable maps (of any degree) to an arbitrary target
curve \cite{GMO}. In any case, the initial $0$-dimensional input data is
obtained by tropicalization from the algebraic situation. For example, for the
vertex in the origin in the picture above the weight is just the number of
lines in $ \PP^3 $ through $ L_1 \cap L_2 $ and $ L_3 \cap L_4 $ for any four
general lines $ L_1,L_2,L_3,L_4 \subset \PP^3 $, which is $1$.

The organization of this paper is as follows. In Chapter \ref{sec-prelim} we
give the necessary background from tropical geometry. While most of this
material is well-known, there are three techniques that go beyond the usual
theory: partially open versions of tropical varieties in Section
\ref{subsec-X}, quotient maps (and their intersection-theoretic properties) in
Section \ref{subsec:quotients}, and pull-backs of diagonals of smooth varieties
in Appendix \ref{sec:diagonal}. Chapter \ref{sec:gluing} then describes the
gluing process for curves and constructs the tropical moduli spaces from the
given input data. Finally, in Chapter \ref{sec-lines} we study the case of
lines in surfaces.

This paper is based on parts of the Ph.\,D. thesis of the second author
\cite{Och13}. It would not have been possible without extensive computations of
examples which enabled us to establish and prove conjectures about polyhedra
and their weights in our moduli spaces. For this we used the polymake extension
a-tint \cite{polymake,Ham12} and GAP \cite{gap4}. We thank an anonymous referee
for useful suggestions on the exposition. The work of the first author was
partially funded by the DFG grant GA 636/4-2, as part of the Priority Program
1489.

\section{Preliminaries} \label{sec-prelim}

\subsection{Partially open tropical varieties} \label{subsec-X}

Although most of the spaces occurring in this paper will be tropical varieties,
some of our intermediate constructions also involve ``partially open'' versions
of them. In these more general spaces the boundary faces of some polyhedra can
be missing, and thus the balancing condition is required to hold at fewer
places. The constructions in this introductory chapter are adapted to this
setting and thus sometimes slightly more general than usual. However, since all
constructions relevant to us are local, the required changes are minimal and
straightforward.

For more details on the notions of tropical cycles and fans, see \eg
\cite{GKM07,AR07}.

\begin{notation}[Polyhedra] \label{not-polyhedra}
  Let $ \Lambda $ denote a lattice isomorphic to $ \ZZ^N $ for some $ N \ge 0
  $, and let $ V := \Lambda \otimes_\ZZ \RR $ be the corresponding real vector
  space. A \emph{partially open (rational) polyhedron} in $V$ is a subset $
  \sigma \subset V $ that is the intersection of finitely many open or closed
  half-spaces $ \{ x \in V: f(x) < c \} $ resp.\ $ \{ x \in V: f(x) \le c
  \} $ for $ c \in \RR $ and $f$ in $ \Lambda^\vee $, the dual of $\Lambda$. We
  call $ \sigma $ a (closed) polyhedron if it can be written in this way with
  only closed half-spaces.

  The \emph{(relative) interior} $\sigma^\circ$ of a partially open
  polyhedron is the topological interior of $\sigma$ in its affine span. We
  denote by $ V_\sigma \subset V $ the linear space which is the shift of the
  affine span of $ \sigma $ to the origin, and set $ \Lambda_\sigma := V_\sigma
  \cap \Lambda $. The \emph{dimension} of $ \sigma $ is defined to be the
  dimension of $ V_\sigma $.

  A \emph{face} $ \tau $ of a partially open polyhedron $ \sigma $ is a
  non-empty subset of $ \sigma $ that can be obtained by changing some of the
  non-strict inequalities $ f(x) \le c $ defining $ \sigma $ into equalities.
  We write this as $ \tau \le \sigma $, or $ \tau < \sigma $ if in addition $
  \tau \neq \sigma $. If $ \dim \tau = \dim \sigma -1 $ we call $ \tau $ a
  \emph{facet} of $ \sigma $. In this case we denote by $ u_{\sigma/\tau} \in
  \Lambda_\sigma / \Lambda_\tau $ the \emph{primitive normal vector} of
  $\sigma$ relative to $\tau$, \ie the unique generator of $ \Lambda_\sigma /
  \Lambda_\tau $ lying in the half-line of $\sigma$ in $ V_\sigma / V_\tau
  \cong \RR $.
\end{notation}

\begin{definition}[Polyhedral complexes and tropical varieties]
    \label{def-complex}
  A \emph{partially open polyhedral complex} in $V$ is a collection $X$
  of partially open polyhedra in a vector space $ V = \Lambda \otimes_\ZZ \RR
  $, also called cells, such that
  \begin{enumerate}
  \item \label{def-complex:a}
    if $ \sigma \in X $ and $ \tau $ is a face of $ \sigma $ then $ \tau \in X
    $; and
  \item \label{def-complex:b}
    if $ \sigma_1,\sigma_2 \in X $ then $ \sigma_1 \cap \sigma_2 $ is empty or
    a face of both $ \sigma_1 $ and $ \sigma_2 $.
  \end{enumerate}
  It is called pure-dimensional if each inclusion-maximal cell has the same
  dimension. The \emph{support of $X$}, denoted by $|X|$, is the union of all $
  \sigma \in X $ in $V$.

  A \emph{weighted partially open polyhedral complex} is a pair $ (X,\omega_X)
  $, where $X$ is a purely $k$-dimensional partially open polyhedral complex,
  and $ \omega_X $ is a map associating a \emph{weight} $\omega_X(\sigma)\in
  \ZZ$ to each $k$-dimensional cell $ \sigma \in X $. If there is no risk of
  confusion we will write $ \omega_X $ as $ \omega $, and $ (X,\omega_X) $ just
  as $X$. A \emph{partially open tropical cycle} $X$ in $V$ is a weighted
  polyhedral complex such that for each cell $\tau$ of dimension $ k-1 $ the
  \emph{balancing condition}
    \[ \sum_{\sigma:\sigma>\tau} \omega(\sigma) \cdot u_{\sigma/\tau} = 0
      \quad \in V / V_\tau \]
  holds. It is called a \emph{partially open tropical variety} if all weights
  are non-negative. If all polyhedra in $X$ are closed, we omit the attribute
  ``partially open'' and speak \eg of tropical cycles and tropical varieties.
  A \emph{tropical fan} is a tropical variety all of whose polyhedra are cones.

  Often, the exact polyhedral complex structure of our cycles is not important.
  We call two (partially open) cycles equivalent if they allow a common
  refinement (where a refinement is required to respect the weights, and every
  polyhedron of a refinement must be closed in its corresponding cell of the
  original cycle). By abuse of notation, the corresponding equivalence classes
  will again be called (partially open) tropical cycles.

  A \emph{morphism} $ f : X \to Y $ between (partially open) tropical cycles
  $X$ and $Y$ is a locally affine linear map $ f: |X| \to |Y| $, with
  the linear part induced by a map between the underlying lattices. It is
  called an \emph{isomorphism} if it has a two-sided inverse (up to
  refinements) and respects the weights.
\end{definition}

\begin{example}[Linear spaces] \label{ex-linear}
  Let $V=\RR^k$, denote by $e_i$ for $i=1,\ldots,k$ the negatives of the
  vectors of the standard basis, and set $e_0=-e_1-\cdots-e_k$. For $ r<k $ we
  denote by $ L^k_r $ the tropical fan whose simplicial cones are indexed by
  subsets $ I \subset \{0,\ldots,k\}$ with at most $r$ elements and given by
  the cones generated by all $ e_i $ with $ i \in I $. The weights of the
  top-dimensional cones, corresponding to subsets of size $r$, are all set to
  $1$. This is the tropicalization of a general $r$-dimensional linear space
  over the Puiseux series with constant coefficient equations
  \cite[Proposition 2.5 and Theorem 4.1]{FS05}.

  The following pictures illustrate these spaces, where all displayed cones are
  thought to be extending to infinity. If instead we interpret the pictures as
  bounded spaces they represent partially open tropical varieties obtained by
  intersecting $ L_r^k $ with an open bounded polyhedron.

  \begin {center} \input {pics/example} \end {center}
\end{example}

\begin{remark} \label{rem-complex}
  In our Definition \ref{def-complex} it is allowed that two partially open
  polyhedra in a complex do not intersect although their closures do. For
  example, in the pictures above we could replace all polyhedra by their
  relative interiors. This would give us weighted partially open polyhedral
  complexes with the same support, and whose face relations and balancing
  conditions are trivial. However, spaces of this type will not occur in our
  constructions in this paper --- we will always have partially open polyhedral
  complexes $X$ such that $ \sigma \cap \tau = \emptyset $ for given $ \sigma,
  \tau \in X $ implies $ \overline \sigma \cap \overline \tau = \emptyset $.
\end{remark}

\begin{definition}[Smooth varieties] \label{def-smooth}
  For simplicity, in this paper we will follow \cite{All09} and call a
  tropical variety $X$ \emph{smooth} if it is locally isomorphic to some $
  L^k_r \times \RR^m $ around $0$ at each point (where $ r+m = \dim X $ is
  fixed, but otherwise $ k,r,m $ may depend on the chosen point). This is more
  special than the usual definition of smoothness which allows any polyhedral
  complex locally isomorphic to a matroid fan as in Appendix
  \ref{sec:diagonal}. We expect that our results would hold in this more
  general setting as well.

  For a smooth variety $X$, following \cite[Section 5.3]{Mik06} the
  \emph{canonical divisor} $ K_X $ of $X$ is defined to be the weighted
  polyhedral complex given by the codimension-$1$ skeleton of $X$, where the
  weight of a codimension-$1$ cell of $X$ is the number of adjacent maximal
  cells minus $2$.
\end{definition}

\begin{example}[Smooth curves and hypersurfaces] \label{ex-smooth}
  In this paper, the following two cases of smooth varieties will be of
  particular importance.
  \begin{enumerate}
  \item \label{ex-smooth:a}
    Consider a connected $1$-dimensional tropical variety $X$ in $\RR^N$ which
    is a tree --- we will refer to such a space as a \emph{rational curve}. The
    smoothness condition then means that for any vertex of $X$ the primitive
    integer vectors in the directions of the adjacent edges can be mapped to $
    e_0,\ldots,e_k$ in $ \RR^k $ for some $ k \leq N $ by a $\ZZ$-linear map,
    and that all edges have weight $1$. Such a vertex occurs in the canonical
    divisor $ K_X $ with weight $ k-1 $.
  \item \label{ex-smooth:b}
    Consider a hypersurface $X$ in $\RR^N$, \ie a tropical variety given by a
    \emph{tropical polynomial} in $N$ variables \cite{RST03}. The coefficients
    of this polynomial determine a subdivision of its Newton polytope, and the
    weighted polyhedral complex structure of $X$ is induced by this
    subdivision. Smoothness then means that this subdivision is unimodular
    \cite[Proposition 3.11]{Mi03}, and hence $ K_X $ contains each
    codimension-$1$ cell of $X$ with weight $1$. Most important for us will be
    the case of a cubic surface in $ \RR^3 $, \ie of $X$ being dual to a
    unimodular subdivision of the lattice polytope in $\ZZ^3$ with vertices
    $(0,0,0)$, $(3,0,0)$, $(0,3,0)$, and $(0,0,3)$. 
  \end{enumerate}

  \begin {center} \input {pics/smooth} \end {center}
\end{example}

\subsection{Tropical quotients} \label{subsec:quotients}

In this section we will define quotients of partially open tropical cycles by
vector spaces in certain cases.

\begin{definition}[Lineality space] \label{def-lineality}
  Let $X$ be a partially open polyhedral complex in a vector space $ V =
  \Lambda \otimes_\ZZ \RR $, and let $ L \subset V $ be a vector subspace
  defined over $ \QQ $. We say that $L$ is a \emph{lineality space} for $X$ if,
  for a suitable complex structure of $X$, for all $ \sigma \in X $ and $ x \in
  \sigma $ the intersection $ \sigma \cap (x+L) $ is open in $ x+L $ and equal
  to $ |X| \cap (x+L) $.
\end{definition}

\begin{remark} \label{rem-lineality}
  If $X$ is a tropical variety, \ie $ \sigma $ is closed in $V$ for all $
  \sigma \in X $, then $ \sigma \cap (x+L) $ can only be open and non-empty
  if it is all of $ x+L $. So in this case we arrive at the usual notion of
  lineality space found in the literature (although note that most authors
  only call a maximal subspace with this property a lineality space).
\end{remark}

\begin{lemma} \label{lem-lineality}
  Let $X$ be a partially open polyhedral complex in $V$ with a lineality space
  $L$. Denote by $ q: V \to V/L $ the quotient map, where $ V/L $ is considered
  to have the underlying lattice $ \Lambda/(\Lambda \cap L) $. Then for all $
  \sigma, \tau \in X $ we have:
  \begin{enumerate}
  \item \label{lem-lineality:a}
    $ q(\sigma) $ is a partially open polyhedron of dimension $ \dim q(\sigma)
    = \dim \sigma - \dim L $.
  \item \label{lem-lineality:b}
    If $ \tau \le \sigma $ then $ q(\tau) \le q(\sigma) $.
  \item \label{lem-lineality:c}
    $ q(\sigma \cap \tau) = q(\sigma) \cap q(\tau) $.
  \item \label{lem-lineality:d}
    If $ q(\sigma) = q(\tau) $ then $ \sigma=\tau $.
  \item \label{lem-lineality:e}
    $ \Lambda_{q(\sigma)} = \Lambda_\sigma / (\Lambda \cap L) $; and if
    $ \tau $ is a facet of $ \sigma $ then $ u_{q(\sigma)/q(\tau)} =
    \overline{u_{\sigma/\tau}} $ with this identification.
  \end{enumerate}
\end{lemma}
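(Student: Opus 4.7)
The key preliminary observation is that the lineality hypothesis forces $L \subset V_\sigma$ for every $\sigma \in X$: for $x \in \sigma$, the set $\sigma \cap (x+L)$ is open in $x+L$ and contains $x$, so a neighborhood of $x$ inside $x+L$ lies in $\sigma$, which means $L$ is contained in the direction space $V_\sigma$. Once this is established, the quotient $V_\sigma/L$ is a subspace of $V/L$ of dimension $\dim\sigma - \dim L$. The lattice statement in (e) reduces to a short diagram chase: an element of $\Lambda/(\Lambda\cap L)$ lies in $V_\sigma/L$ iff some representative in $\Lambda$ lies in $V_\sigma + L = V_\sigma$, i.e.\ in $\Lambda_\sigma$.

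For (a), I would argue that $\sigma$ is genuinely $L$-translation invariant inside its affine span. Writing $\sigma$ as an intersection of half-spaces and restricting to its affine span, any defining functional $f$ occurring in a non-strict inequality $f \le c$ must vanish on $L$; otherwise, a boundary point $x \in \sigma$ with $f(x)=c$ would violate openness of $\sigma \cap (x+L)$ at $x$. Functionals appearing only in strict inequalities do not constrain near interior points, but can be replaced by non-strict ones after a suitable refinement of the complex. Choosing a rational complement $L'$ of $L$ in $V_\sigma$ and a basepoint $x_0 \in \sigma$, this yields $\sigma = \sigma' + L$ with $\sigma' = \sigma \cap (x_0 + L')$ a partially open polyhedron; since $q$ is an affine isomorphism on $x_0+L'$, $q(\sigma) = q(\sigma')$ is a partially open polyhedron of dimension $\dim L' = \dim\sigma - \dim L$. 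Part (b) is then immediate: faces of $\sigma$ correspond to replacing some non-strict inequalities by equalities, and this operation is compatible with the descent to $V/L$.

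The main content is (c). The inclusion $q(\sigma \cap \tau) \subset q(\sigma) \cap q(\tau)$ is trivial. Conversely, if $\bar y = q(y_\sigma) = q(y_\tau)$ with $y_\sigma \in \sigma,\ y_\tau \in \tau$, then both points lie in the affine subspace $q^{-1}(\bar y) = y_\sigma + L = y_\tau + L$. Applying the lineality condition once at $y_\sigma \in \sigma$ and once at $y_\tau \in \tau$ gives
\[ \sigma \cap q^{-1}(\bar y) = |X| \cap q^{-1}(\bar y) = \tau \cap q^{-1}(\bar y), \]
so $y_\sigma \in \sigma \cap \tau$, proving $\bar y \in q(\sigma \cap \tau)$. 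Part (d) then follows formally: $q(\sigma) = q(\tau)$ combined with (c) yields $q(\sigma \cap \tau) = q(\sigma)$, and by (a) this forces $\dim(\sigma \cap \tau) = \dim\sigma$; since $\sigma \cap \tau$ is a face of $\sigma$ by axiom \refx{def-complex:b}, we get $\sigma \cap \tau = \sigma$, and symmetrically $\sigma \cap \tau = \tau$.

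For the primitive normal vector in (e), the identifications $\Lambda_{q(\sigma)} = \Lambda_\sigma/(\Lambda \cap L)$ and $\Lambda_{q(\tau)} = \Lambda_\tau/(\Lambda \cap L)$, together with $L \subset V_\tau \subset V_\sigma$, give a canonical isomorphism $\Lambda_{q(\sigma)}/\Lambda_{q(\tau)} \cong \Lambda_\sigma/\Lambda_\tau$. Under this isomorphism the half-line of $q(\sigma)$ in $V_{q(\sigma)}/V_{q(\tau)} \cong V_\sigma/V_\tau$ is exactly the image of the half-line of $\sigma$, so primitive generators correspond. The only real obstacle in the whole argument is the careful passage from the lineality hypothesis (which is stated only in terms of intersections with affine translates of $L$) to the cleaner statement that the defining inequalities of $\sigma$ vanish on $L$; everything else is a formal consequence of that reduction and of the polyhedral complex axioms.
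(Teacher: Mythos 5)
Your overall strategy (establish $L\subset V_\sigma$, show the non‑strict defining inequalities vanish on $L$, derive (b)--(e) formally) matches the paper's, and your arguments for (c), (d) and (e) are essentially correct and the same as the paper's. But the treatment of part (a) has a genuine gap, and the remaining parts depend on (a).

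You claim that $\sigma$ is $L$-translation invariant, so that $\sigma = \sigma' + L$ for $\sigma' := \sigma\cap(x_0+L')$ and hence $q(\sigma)=q(\sigma')$. This is false. The lineality hypothesis only says that each slice $\sigma\cap(x+L)$ is \emph{open} in $x+L$; it does not say the slice is all of $x+L$, and different slices can have different extent. Consider $V=\RR^2$, $L$ the $y$-axis, and
\[
\sigma = \{(x,y): -1 < y-x < 1\} \cap \{(x,y): -1<y<1\}.
\]
Each fiber $\sigma\cap((a,b)+L) = \{(a,y): \max(-1,a-1)<y<\min(1,a+1)\}$ is open, so $L$ is a lineality space. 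But with $x_0=(0,0)$ and $L'$ the $x$-axis one computes $q(\sigma') = (-1,1)$ while $q(\sigma)=(-2,2)$, so $q(\sigma)\neq q(\sigma')$ and a fortiori $\sigma\neq\sigma'+L$. The remark that strict inequalities ``do not constrain near interior points'' and ``can be replaced by non-strict ones after refinement'' is what goes wrong: strict inequalities in the $L$-direction genuinely constrain the image, and replacing them by non-strict ones would destroy the openness required of a lineality space.

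The paper's proof avoids this by keeping the strict inequalities involving the $L$-direction and computing their projection directly: after reducing to $\dim L=1$ it writes $\sigma = q^{-1}(\sigma_0)\cap\{y>f_i(x)+a_i,\ y<g_j(x)+b_j\}$ and observes $q(\sigma) = \sigma_0 \cap \{f_i(x)+a_i < g_j(x)+b_j\}$ (Fourier--Motzkin elimination of $y$). The new inequalities $f_i(x)+a_i<g_j(x)+b_j$ are exactly what your argument misses. To repair your proof you would need to replace the ``$\sigma=\sigma'+L$'' step by this elimination argument (or an equivalent one); once $q(\sigma)$ is correctly identified as a partially open polyhedron, your derivations of (b)--(e) go through. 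Note also that (b) should be checked against the corrected description of $q(\sigma)$: faces of $q(\sigma)$ may involve the new inequalities $f_i+a_i\le g_j+b_j$, and the paper handles this by writing $\tau$ in the same normal form with the same $f_i,g_j$.
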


\begin{proof}
  By induction it suffices to prove the statements for $ \dim L = 1 $.
  We choose coordinates $ (x,y) \in \RR^{\dim V-1} \times \RR \cong V $ such
  that $ L = \{ (x,y): x=0 \} $, and consider $x$ as coordinates on $ V/L $.

  \refx{lem-lineality:a} In the defining inequalities for $ \sigma $ we may
  assume that all of them that contain $y$ are strict: if one of the non-strict
  defining inequalities $ f(x,y) \le c $ containing $y$ was satisfied as an
  equality at a point $ (x_0,y_0) \in \sigma $ it could not be satisfied in a
  neighborhood of $ y_0 \in \RR $, contradicting the openness of $ \sigma \cap
  ((x_0,y_0)+L) $. Hence $ \sigma $ can be written as
    \[ \sigma = q^{-1}(\sigma_0) \cap
         \{ (x,y): y > f_i(x)+a_i \text{ and } y < g_j(x)+b_j
         \text{ for all $i,j$} \} \tag {1} \]
  for some linear forms $ f_i, g_j $, constants $ a_i,b_j $, and a partially
  open polyhedron $ \sigma_0 $ given by the defining inequalities of $ \sigma $
  that do not contain $y$. But then
    \[ q(\sigma) = \sigma_0 \cap \{ x: f_i(x)+a_i < g_j(x)+b_j
         \text{ for all $ i,j $} \}, \tag {2} \]
  since for $x$ satisfying these conditions we can always find $ y \in \RR $
  with $ f_i(x)+a_i < y < g_j(x)+b_j $ for all $ i,j $. Hence $ q(\sigma) $ is
  a partially open polyhedron. Moreover, the openness of $ \sigma \cap
  ((x,y)+L) $ means that all non-empty fibers of $ q|_\sigma $ have dimension $
  \dim L $, so that $ \dim q(\sigma) = \dim \sigma - \dim L $.

  \refx{lem-lineality:b} If $ \tau \le \sigma $ is obtained from $ \sigma $ by
  changing some non-strict inequalities to equalities this means that $ \tau $
  can be written as in $(1)$ for some $ \tau_0 \le \sigma_0 $ and with the same
  $ f_i,g_j $. Then $(2)$ holds for $ \tau $ and $ \tau_0 $ as well, and we
  conclude that $ q(\tau) $ is a face of $ q(\sigma) $.

  \refx{lem-lineality:c} The inclusion ``$ \subset $'' is obvious. Conversely,
  if $ x \in q(\sigma) \cap q(\tau) $ then there are $ y,y' \in \RR $ with
  $ (x,y) \in \sigma $ and $ (x,y') \in \tau $. Hence
    \[ (x,y) \in \sigma \cap ((x,y')+L) \subset |X| \cap ((x,y')+L), \]
  which implies $ (x,y) \in \tau \cap ((x,y')+L) $ by definition of a lineality
  space. Hence $ (x,y) \in \sigma \cap \tau $, \ie $ x = q(x,y) \in q(\sigma
  \cap \tau) $.

  \refx{lem-lineality:d} By \refx{lem-lineality:c}, the equality $ q(\sigma) =
  q(\tau) $ implies $ q(\sigma \cap \tau) = q(\sigma) $. In particular, $
  \sigma \cap \tau \neq \emptyset $. Hence $ \sigma \cap \tau $ is a face of $
  \sigma $, which by the dimension statement of \refx{lem-lineality:a} must be
  of the same dimension as $ \sigma $. But this means that $ \sigma \cap \tau =
  \sigma $, and by symmetry thus also $ \sigma \cap \tau = \tau $.

  \refx{lem-lineality:e} From the definition of a lineality space it follows
  that $ L \subset V_\sigma $; hence $ V_{q(\sigma)} = V_\sigma/L $ and thus
  $ \Lambda_{q(\sigma)} = \Lambda_\sigma / (\Lambda \cap L) $. In particular,
  for $ \tau $ a facet of $ \sigma $ we have $ \Lambda_\sigma / \Lambda_\tau =
  \Lambda_{q(\sigma)} / \Lambda_{q(\tau)} $ (with the isomorphism given by
  taking quotients by $ \Lambda \cap L $), from which the statement about the
  primitive normal vector follows.
\end{proof}

\begin{corollary}[Quotients] \label{cor-quotients}
  Let $X$ be an $n$-dimensional partially open tropical variety in $ V =
  \Lambda \otimes_\ZZ \RR $ with a lineality space $L$, and let $ q: V \to V/L
  $ be the quotient morphism. Then
    \[ X/L := \{ q(\sigma): \sigma \in X \} \]
  together with the weights $ \omega_{X/L}(q(\sigma)) := \omega_X(\sigma) $
  is a partially open tropical variety of dimension $ n-\dim L $ in the vector
  space $ V/L $ with lattice $ \Lambda / (\Lambda \cap L) $. We will also
  denote it by $ q(X) $.
\end{corollary}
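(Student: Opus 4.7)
The plan is to check the definition piece by piece, observing that Lemma~\ref{lem-lineality} has done essentially all of the work: parts (a)--(c) yield the polyhedral complex structure, part (d) guarantees that the weights are well-defined, and part (e) transports the balancing condition across the quotient. The proof will therefore mostly consist of correctly bookkeeping how cells of $X$ correspond to cells of $X/L$ under $q$.

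First I would verify the polyhedral complex axioms of Definition~\ref{def-complex} for $X/L$. By Lemma~\ref{lem-lineality}\refx{lem-lineality:a} each $q(\sigma)$ is a partially open polyhedron of dimension $\dim\sigma - \dim L$. For axiom \refx{def-complex:a}, I need that every face of $q(\sigma)$ is itself of the form $q(\tau)$ for some $\tau \le \sigma$ in $X$; this I can read off from the description of $q(\sigma)$ given by equation $(2)$ in the proof of Lemma~\ref{lem-lineality}\refx{lem-lineality:a}: the only non-strict defining inequalities are inherited from $\sigma_0$, so the faces of $q(\sigma)$ biject with faces of $\sigma_0$, and these are exactly the $q(\tau)$ for $\tau \le \sigma$ by \refx{lem-lineality:b}. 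Axiom \refx{def-complex:b} then follows from $q(\sigma_1)\cap q(\sigma_2) = q(\sigma_1\cap\sigma_2)$ by part \refx{lem-lineality:c}, combined with the fact that $\sigma_1\cap\sigma_2$ is a face of both $\sigma_i$ in $X$.

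Next, I would show that the weights are well-defined and that $X/L$ is pure-dimensional of dimension $n-\dim L$. By \refx{lem-lineality:d} the map $\sigma \mapsto q(\sigma)$ is injective on cells, so the assignment $\omega_{X/L}(q(\sigma)) := \omega_X(\sigma)$ is unambiguous; moreover this injectivity combined with the dimension formula of \refx{lem-lineality:a} shows that the maximal cells of $X/L$ are precisely the images of the maximal cells of $X$, giving pure dimension $n-\dim L$ and non-negative weights.

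The main step is the balancing condition. Let $q(\tau) \in X/L$ be a cell of codimension $1$, so that $\tau \in X$ also has codimension $1$. The definition of lineality space forces $L \subset V_\tau$ for every $\tau \in X$ (since $\tau\cap(x+L)$ is open in $x+L$), which means that $V_{q(\tau)} = V_\tau/L$ and hence $(V/L)/V_{q(\tau)} = V/V_\tau$ canonically. I then need to identify the cells of $X/L$ properly containing $q(\tau)$: by \refx{lem-lineality:b} every $\sigma > \tau$ gives $q(\sigma) > q(\tau)$, and the converse (that every such face of $q(\sigma)$ above $q(\tau)$ lifts uniquely to some $\sigma > \tau$) is precisely where I expect to spend the most care — it comes from combining the face description above with \refx{lem-lineality:d}. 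Once this correspondence is set up, Lemma~\ref{lem-lineality}\refx{lem-lineality:e} lets me compute
\[ \sum_{\sigma'>q(\tau)} \omega_{X/L}(\sigma')\cdot u_{\sigma'/q(\tau)}
   \;=\; \sum_{\sigma>\tau} \omega_X(\sigma)\cdot \overline{u_{\sigma/\tau}}
   \;=\; \overline{\sum_{\sigma>\tau} \omega_X(\sigma)\cdot u_{\sigma/\tau}}
   \;=\; 0 \]
in $V/V_\tau$, where the last equality is the balancing of $X$ at $\tau$. This establishes the balancing of $X/L$, completing the proof. The principal technical obstacle is really just making sure that the face/coface relations in $X/L$ pull back cleanly to $X$ without duplication; everything else is formal from Lemma~\ref{lem-lineality}.
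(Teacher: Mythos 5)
Your proof is correct and follows the same approach as the paper's, which likewise derives the polyhedral complex structure from parts (a)--(d) of Lemma~\ref{lem-lineality} and the balancing condition from part (e). You spell out the face/coface bookkeeping in more detail than the paper does, but the underlying argument is identical.
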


\begin{proof}
  By Lemma \ref{lem-lineality} \refx{lem-lineality:a} we see that $ X/L $ is a
  collection of partially open polyhedra which by \refx{lem-lineality:d} are in
  bijection to the polyhedra in $X$. The statements \refx{lem-lineality:b} and
  \refx{lem-lineality:c} of the lemma now imply that $ X/L $ is a partially
  open polyhedral complex as in Definition \ref{def-complex}. Moreover, the
  dimension statement of part \refx{lem-lineality:a} of the lemma means that $
  X/L $ is of pure dimension $ n - \dim L $, and that $ \omega_{X/L}(q(\sigma))
  := \omega_X(\sigma) $ defines a weight function on the top-dimensional cones.
  Finally, part \refx{lem-lineality:e} of the lemma shows that the images of
  the balancing conditions for $X$ in $V$ give us the balancing conditions for
  $ X/L $ in $ V/L $.
\end{proof}

Our main examples for this quotient construction are (cycles in) the moduli
spaces of tropical curves, which we introduce in Section \ref{subsec-m0nrd}.

\subsection{Tropical intersection theory} \label{subsec-tropint}

We now brief\/ly recall some constructions and results of tropical intersection
theory. For a detailed introduction we refer to \cite{AR07,ShaPhD,Sha10}.
Although the theory is only developed for closed tropical cycles there, the
extension to the case of partially open tropical cycles stated below follows
immediately since all constructions involved are local.

\begin{construction}[Rational functions and divisors]
    \label{constr-divisor}
  For a partially open pure-dimensional tropical cycle $X$, a \emph{(non-zero)
  rational function on $X$} is a continuous function $ \varphi : |X| \to \RR $
  that is affine linear on each cell (for a suitable polyhedral complex
  structure), with linear part given by an element of $ \Lambda^\vee $. We can
  associate to such a rational function $\varphi$ a \emph{(Weil) divisor},
  denoted by $\varphi \cdot X$. It is a partially open tropical subcycle of $X$
  (i.e.\ a partially open cycle whose support is a subset of $ |X| $) of
  codimension $1$. Its support is contained in the subset of $|X|$ where
  $\varphi$ is not locally affine linear \cite[Construction 3.3]{AR07}.

  Two important examples for rational functions and divisors in this paper are:
  \begin{enumerate}
  \item \label{constr-divisor:a}
    On $X=\RR^N$, a tropical polynomial $\varphi$ defines a rational function,
    and its divisor is just the tropical hypersurface defined by $\varphi$.
  \item \label{constr-divisor:b}
    If $X$ is one-dimensional, the divisor $ \varphi \cdot X $ consists of
    finitely many points. We define its \emph{degree} of to be the sum of
    the weights of the points in $\varphi \cdot X$. By abuse of notation, we
    sometimes write this degree as $\varphi \cdot X$ as well.
  \end{enumerate}
  Multiple intersection products $\varphi_1 \cdot \; \cdots \; \cdot \varphi_r
  \cdot X$ are commutative by \cite[Proposition 3.7]{AR07}.
\end{construction}

\begin{remark}[Pull-backs and push-forwards] \label{rem-push-pull}
  Rational functions on a (partially open) tropical cycle $Y\subset \Lambda'
  \otimes_\ZZ \RR$ can be \emph{pulled back} along a morphism $f:X\rightarrow
  Y$ to rational functions $ f^* \varphi = \varphi \circ f$ on $X$. Also, we
  can \emph{push forward} subcycles $Z$ of $X$ to subcycles $ f_* Z $ of $Y$ of
  the same dimension \cite[Proposition 4.6 and Corollary 7.4]{AR07}, where in
  the partially open case we will always restrict ourselves to injective maps
  $f$ so that no problems can arise from two partially open polyhedra with
  different boundary behavior that are mapped by $f$ to an overlapping image.
  In any case, by picking a suitable refinement of $X$ we can ensure that the
  partially open image polyhedra $f(\sigma)$ for $ \sigma \in X $ form a
  partially open polyhedral complex $ f_* Z $. For a top-dimensional cell $
  \sigma'\in f_* Z $ its weight $ \omega_{f_* Z} (\sigma')$ is given by
    \[ \omega_{f_* Z} (\sigma') :=
         \sum_{ \sigma}
	 \omega_X (\sigma) \cdot |\Lambda'_{\sigma'}/f(\Lambda_\sigma)|, \]
  where the sum goes over all top-dimensional cells $\sigma\in Z$ with
  $f(\sigma) = \sigma'$. Of course, in the partially open case, there will be
  at most one such $ \sigma $ in each sum since $f$ is assumed to be injective.
  As expected, push-forwards and pull-backs satisfy the projection formula
  \cite[Proposition 4.8 and Corollary 7.7]{AR07}.
\end{remark}

\begin{construction}[Pull-backs along quotient maps] \label{constr-pull-quot}
  Let $X$ be a partially open tropical variety with a lineality space $L$, so
  that there is a quotient variety $ X/L $ as in Corollary \ref{cor-quotients}
  with quotient map $ q: X \to X/L $. Moreover, let $Z$ be a partially open
  subcycle of $ X/L $. Then the collection of polyhedra $ q^{-1}(\sigma) $ for
  $ \sigma \in Z $, together with the weight function $ \omega(q^{-1}(\sigma))
  = \omega_Z(\sigma) $, is a partially open subcycle of $X$ of dimension $ \dim
  Z + \dim L $ (in fact, the balancing conditions follows from Lemma
  \ref{lem-lineality} \refx{lem-lineality:e}). We denote it by $ q^* Z $.
\end{construction}

We conclude this short excursion into tropical intersection theory with two
compatibility statements between the quotient construction and pull-backs of
rational functions resp.\ push-forward of cycles.

\begin{lemma} \label{lem:quot-pull}
  Let $X$ be a partially open tropical cycle with lineality space $L$ and
  quotient map $ q: X \to X/L $. Then for any rational function $ \phi $ on $
  X/L $ we have
    \[ (q^* \phi \cdot X) / L = \phi \cdot (X/L). \]
\end{lemma}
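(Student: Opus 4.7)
The plan is to argue cell-by-cell that the two subcycles of $X/L$ have the same support and the same weights, using the explicit formula for divisor weights from \cite[Construction 3.3]{AR07}. Both sides are codimension-$1$ partially open subcycles of $X/L$, so it suffices to check that they are supported on the same underlying set of codimension-$1$ cells and that the weight function agrees on each such cell.

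First I would set up the cell correspondence. By picking a complex structure on $X/L$ on which $\phi$ is affine linear on every cell and lifting it through $q$, we may assume the same of $q^*\phi$ on $X$. The map $\sigma \mapsto q(\sigma)$ is then a bijection between the cells of $X$ and those of $X/L$ by Lemma~\ref{lem-lineality}\refx{lem-lineality:a}--\refx{lem-lineality:d}, it preserves dimensions and the face relation, and by Corollary~\ref{cor-quotients} it preserves weights. In particular, before we can even form $(q^*\phi \cdot X)/L$ we must check that $L$ is a lineality space for the divisor; this follows because for every cell $\sigma \in X$ one has $L \subset V_\sigma$ (as noted in the proof of Lemma~\ref{lem-lineality}\refx{lem-lineality:e}), and the divisor is supported on a union of codimension-$1$ cells of $X$ after a suitable refinement. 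Hence $(q^*\phi \cdot X)/L$ makes sense, and its underlying polyhedral complex is exactly the image of the codimension-$1$ cells of $X$ supporting $q^*\phi \cdot X$, which via $q$ matches the codimension-$1$ cells of $X/L$ supporting $\phi \cdot (X/L)$.

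Now I would compare weights. Fix a codimension-$1$ cell $\tau$ of $X$ and let $\bar\tau := q(\tau)$. The weight formula reads
\[ \omega_{q^*\phi \cdot X}(\tau) = \sum_{\sigma > \tau} \omega_X(\sigma)\, (q^*\phi)_\sigma(v_{\sigma/\tau}) - (q^*\phi)_\tau\Bigl(\sum_{\sigma > \tau} \omega_X(\sigma)\, v_{\sigma/\tau}\Bigr), \]
where $v_{\sigma/\tau}$ is any lift of $u_{\sigma/\tau}$, and analogously for $\omega_{\phi \cdot (X/L)}(\bar\tau)$ with the sum running over top-dimensional cells $\bar\sigma > \bar\tau$. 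Since $q^*\phi = \phi \circ q$, the linear part $(q^*\phi)_\sigma$ equals $\phi_{q(\sigma)} \circ q$; by Lemma~\ref{lem-lineality}\refx{lem-lineality:e} we have $q(u_{\sigma/\tau}) = u_{q(\sigma)/q(\tau)}$; and by Corollary~\ref{cor-quotients} the weights match. Substituting these identifications turns one formula into the other, so $\omega_{q^*\phi \cdot X}(\tau) = \omega_{\phi \cdot (X/L)}(\bar\tau)$. By definition of the quotient weights, this is exactly $\omega_{(q^*\phi \cdot X)/L}(\bar\tau)$, which gives the claimed equality.

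The main obstacle is really a bookkeeping one rather than a conceptual one: one must choose compatible polyhedral structures so that the cell bijection and the divisor formula can be applied simultaneously on both sides, and one must verify that $L$ is still a lineality space for the (partially open) subcycle $q^*\phi \cdot X$ so that the left-hand quotient is even defined. Once these compatibilities are in place, the key algebraic identity is the commutativity $(q^*\phi)_\sigma = \phi_{q(\sigma)} \circ q$ combined with $q(u_{\sigma/\tau}) = u_{q(\sigma)/q(\tau)}$, both of which have already been recorded earlier in the section.
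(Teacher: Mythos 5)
Your proof is correct, and it does carefully what the paper dismisses with the one-line proof ``This is obvious from the definitions.'' The approach is the only natural one: fix compatible complex structures, observe that $q$ gives a dimension- and weight-preserving bijection of cells (Lemma~\ref{lem-lineality} and Corollary~\ref{cor-quotients}), and then match the divisor weight formula term by term using $(q^*\phi)_\sigma = \phi_{q(\sigma)}\circ q$ and $q(u_{\sigma/\tau}) = u_{q(\sigma)/q(\tau)}$; you also rightly flag that one must check $L$ remains a lineality space for $q^*\phi\cdot X$ before the left-hand side is even defined, a point the paper leaves implicit.
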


\begin{proof}
  This is obvious from the definitions.
\end{proof}

\begin{lemma} \label{lem:quot-push}
  Let $ f: X \to Y $ be a morphism between partially open tropical cycles.
  Assume that $X$ and $Y$ have lineality spaces $L$ and $ L' $, respectively,
  and that the linear part of $f$ on each cell maps $ \Lambda_X \cap L $
  isomorphically to $ \Lambda_Y \cap L' $. If $ g : X/L \to Y/L' $ is the
  morphism giving a commutative diagram
    \[ \begin{tikzcd}
         X \arrow{r}{f} \arrow{d}{q} & Y \arrow{d}{q'} \\
         X/L \arrow{r}{g} & Y/L'
       \end{tikzcd} \]
  then $ f_* (X)/L' = g_*(X/L). $
\end{lemma}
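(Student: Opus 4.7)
The plan is to show that $f_*(X)/L'$ and $g_*(X/L)$ coincide as partially open subcycles of $Y/L'$ by exhibiting a natural cell-by-cell bijection (both sides will be indexed by the top-dimensional cells $\sigma$ of $X$) and verifying that the corresponding weights agree.

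Before forming $f_*(X)/L'$, I would first check that $L'$ really is a lineality space of $f_*X$: since lineality spaces are $\QQ$-defined, the hypothesis that $f$ restricts on each cell to a lattice isomorphism $\Lambda_X \cap L \to \Lambda_Y \cap L'$ gives, after tensoring with $\RR$, that the linear part of $f$ maps $L$ isomorphically onto $L'$ on each cell. Combined with the injectivity of $f$ (assumed for push-forwards in the partially open setting, see Remark \ref{rem-push-pull}), this transports the openness of $\sigma \cap (x+L)$ in $x+L$ from Definition \ref{def-lineality} to openness of $f(\sigma) \cap (f(x)+L')$ in $f(x)+L'$, and the same statement at the level of supports, so that $L'$ is indeed a lineality space of $f_*X$.

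For the polyhedral structure, the top-dimensional cells of $f_*(X)/L'$ are by construction of the form $q'(f(\sigma))$ and those of $g_*(X/L)$ are of the form $g(q(\sigma))$, for $\sigma$ ranging over top-dimensional cells of $X$. By commutativity of the given diagram these coincide, and by Lemma \ref{lem-lineality}\refx{lem-lineality:d} applied to both $X$ and $Y$ (together with injectivity of $f$) distinct $\sigma$ give distinct cells on each side. To match weights, unfolding the definitions of quotient and push-forward yields for the two sides
\[ \omega_X(\sigma) \cdot |\Lambda_{f(\sigma)}/f(\Lambda_\sigma)| \quad \text{and} \quad \omega_X(\sigma) \cdot |\Lambda_{g(q(\sigma))}/g(\Lambda_{q(\sigma)})|. \]
Lemma \ref{lem-lineality}\refx{lem-lineality:e} gives $\Lambda_{q(\sigma)} = \Lambda_\sigma/(\Lambda_X \cap L)$ and $\Lambda_{g(q(\sigma))} = \Lambda_{q'(f(\sigma))} = \Lambda_{f(\sigma)}/(\Lambda_Y \cap L')$, while the hypothesis $f(\Lambda_X \cap L) = \Lambda_Y \cap L'$ gives $g(\Lambda_{q(\sigma)}) = f(\Lambda_\sigma)/(\Lambda_Y \cap L')$. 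The third isomorphism theorem then identifies the two lattice indices.

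The main obstacle I expect is the preliminary step of verifying that $L'$ is a lineality space of $f_*X$, so that $f_*(X)/L'$ is even meaningful: this requires the full force of the hypothesis on $f$ together with the injectivity stipulation on push-forwards in the partially open case. Once this is in hand, the remaining matching of cells and weights is essentially bookkeeping using the identifications of Lemma \ref{lem-lineality}.
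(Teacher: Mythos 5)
Your proposal follows essentially the same route as the paper: match the top-dimensional cells of both sides through the commutative square, and equate weights by a lattice-index computation that is the third isomorphism theorem once one knows $f(\Lambda_\sigma)\cap L' = \Lambda_{f(\sigma)}\cap L' = \Lambda_Y\cap L'$ — which is precisely the saturation step the paper carries out before its chain of equalities. One thing you flag that the paper passes over silently is that $L'$ must be a lineality space of $f_*X$ for the left-hand side to be defined; that is a fair observation, but your justification is a bit quick: injectivity of $f$ gives the openness of $f(\sigma)\cap(y+L')$ in $y+L'$, but for the support condition $|f_*X|\cap(y+L')=f(\sigma)\cap(y+L')$ one also needs that $f$ does not send points from different $L$-cosets of $|X|$ into the same $L'$-coset (equivalently, that $g$ is injective on supports), and this is not a formal consequence of injectivity of $f$ alone — it is an implicit standing assumption of the setting. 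Finally, the paper's proof also treats the possibility that a cell $\sigma$ is collapsed, i.e.\ $\dim q'(f(\sigma)) < \dim\sigma - \dim L$, in which case $\sigma$ contributes to neither side; your cell-by-cell indexing silently assumes this does not happen, which is harmless under the injectivity convention for partially open push-forwards (Remark \ref{rem-push-pull}), but the paper's version is more robust.
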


\begin{proof}
  Assume that the polyhedral complex structure of all cycles is sufficiently
  fine to be compatible with all of the morphisms. Let $ \sigma $ be a maximal
  cell in $X$, and set $ \tau = f(\sigma) $ and $ \rho = q'(\tau) $. Applying a
  suitable translation, we may assume that $f$ (and thus also $g$) is linear
  on $ \sigma $.
  
  If $ \dim \rho < \dim \sigma - \dim L $ then $ \sigma $ does not
  contribute to either cycle in the statement of the lemma. Otherwise, the
  assumption implies that $f$ maps $ \Lambda_X \cap L $, and hence every
  saturated sublattice of $ \Lambda_X \cap L $ such as $ \Lambda_\sigma \cap L
  $, to a saturated sublattice of $ \Lambda_Y $. Hence, in the inclusion
    \[ f(\Lambda_\sigma \cap L) \subset f(\Lambda_\sigma) \cap f(L)
       = f(\Lambda_\sigma) \cap L' \subset \Lambda_\tau \cap L' \]
  we must have equality since both sides are saturated lattices in $ \Lambda_Y
  $ of the same rank. By the last equality in this chain it then follows from
  the weight formulas of Corollary \ref{cor-quotients} and Remark
  \ref{rem-push-pull} that the contribution of $ \sigma $ to the cell $ \rho $
  in the two cycles of the lemma is
  \begin{align*}
    \omega_{f_*(X)/L'} (\rho)
    &= \omega_X (\sigma) \cdot |\Lambda_\tau / f(\Lambda_\sigma)| \\
    &= \omega_X (\sigma) \cdot |(\Lambda_\tau/(\Lambda_\tau \cap L')) \,/\,
       (f(\Lambda_\sigma)/(f(\Lambda_\sigma) \cap L'))| \\
    &= \omega_X (\sigma) \cdot |q'(\Lambda_\tau) / q'(f(\Lambda_\sigma)) | \\
    &= \omega_X (\sigma) \cdot |q'(\Lambda_\tau) / g(q(\Lambda_\sigma)) | \\
    &= \omega_{g_*(X/L)}(\rho).
  \end{align*}
\end{proof}

\subsection{Tropical moduli spaces of curves} \label{subsec-m0nrd}

We will now come to the construction of the moduli spaces $ \mm {\RR^N}{\Sigma}
$ of tropical curves in $ \RR^N $. Analogously to the algebraic case, elements
of these spaces will be given by a rational $n$-marked abstract tropical curve,
together with a map to $ \RR^N $ whose image is a (not necessarily smooth)
tropical curve in $ \RR^N $ as in Section \ref{subsec-X}, with unbounded
directions as determined by $ \Sigma $.

For later purposes we will also need a version $ \mmp {\RR^N}{\Sigma} $ of
these spaces where the ends $ x_i $ have bounded lengths. In contrast to the
original spaces $ \mm {\RR^N}{\Sigma} $ they will only be partially open
tropical varieties since there is no limit curve when the length of such a
bounded end approaches zero.

Let us start with the discussion of abstract tropical curves.

\begin{construction}[Abstract curves] \label{constr-m0n}
  A \emph{(rational) abstract tropical curve} \cite[Definition 3.2]{GKM07} is a
  metric tree graph $ \Gamma $ with all vertices of valence at least $3$.
  Unbounded ends (with no vertex there) are allowed and labeled by $ x_1,\dots,
  x_n $. The tuple $ (\Gamma,x_1,\dots,x_n) $ will be referred to as an
  \emph{$n$-marked abstract tropical curve}. Two such curves are called
  isomorphic (and will from now on be identified) if there is an isometry
  between them that respects the markings. The set of all $n$-marked abstract
  tropical curves (modulo isomorphisms) is denoted $ \calM_{0,n} $.

  For $ n \ge 3 $ it follows from \cite[Theorem 3.4]{SS04a},
  \cite[Section 2]{Mi07}, or \cite[Theorem 3.7]{GKM07} that $\calM_{0,n}$ can
  be given the structure of a tropical fan as follows: Consider the map
    \[ d: \calM_{0,n} \to \RR^{\binom{n}{2}}, \;\;
          (\Gamma,x_1,\dots,x_n) \mapsto (\dist_{\Gamma}(x_i,x_j))_{i<j} \]
  where $\dist_{\Gamma}(x_i,x_j)$ denotes the distance between the two marked
  ends $x_i$ and $x_j$ in the metric graph $\Gamma$. Moreover, consider the
  linear map
    \[ \phi: \RR^n \to \RR^{\binom n2}, \;\;
       (a_i)_i \mapsto (a_i+a_j)_{i,j}, \]
  let $ u_i = \phi(e_i) $ be the images of the unit vectors, let $ U_n :=
  \phi(\RR^n) = \langle u_1,\dots,u_n \rangle $, and
    \[ q_n: \RR^{\binom n2} \to Q_n := \RR^{\binom n2} / U_n \]
  be the quotient map. For a subset $I\subset\left[n\right]$ with $2\le |I| \le
  n-2 $ define $v_I$ to be the image under $ q_n \circ d: \calM_{0,n} \to Q_n $
  of a tree $\Gamma$ with exactly one bounded edge of length one, the marked
  leaves $x_i$ with $i\in I$ on one side and the leaves $x_i$ for $i\notin I$
  on the other. Let $ \Lambda_n :=\langle v_I \rangle_\ZZ\subset Q_n$ be the
  lattice in $Q_n$ generated by the vectors $v_I$. By \cite[Theorem 4.2]{SS04a}
  the map $ q_n \circ d:\calM_{0,n} \to Q_n$ is injective, and its image is a
  purely $ (n-3) $-dimensional simplicial tropical fan in $ Q_n = \Lambda_n
  \otimes_\ZZ \RR $, with all top-dimensional cones having weight $1$. In the
  following we will always consider $ \calM_{0,n} $ with this structure of a
  tropical fan.

  The cones of $ \calM_{0,n} $ are labeled by the \emph{combinatorial types} of
  marked curves, \ie by the homeomorphism classes of the curves relative to
  their ends. The vectors $v_I$ generate the rays of $\calM_{0,n}$.
\end{construction}

\begin{construction}[Abstract curves with bounded ends] \label{constr-m0nb}
  We will now adapt Construction \ref{constr-m0n} to the case when the ends
  also have bounded lengths. So we say that an \emph{$n$-marked abstract
  tropical curve with bounded ends} is a metric tree graph as above with the
  unbounded ends replaced by bounded intervals, \ie a metric graph $ \Gamma $
  without $2$-valent vertices, and the $1$-valent vertices labeled by $
  x_1,\dots,x_n $. The notions of isomorphisms and combinatorial types carry
  over from the case with unbounded ends. The set of all $n$-marked curves
  (modulo isomorphisms) with bounded ends is denoted $ \calM'_{0,n} $.

  To make $ \calM'_{0,n} $ for $ n \ge 3 $ into a partially open tropical
  variety we consider the distance map $ d: \calM'_{0,n} \to \RR^{\binom n2} $
  as above, which in this case however includes the lengths of the bounded
  ends. Then $d$ is injective: the vectors $ u_i = \phi(e_i) \in Q_n $ in the
  notation of Construction \ref{constr-m0n} correspond exactly to a change of
  the length of the bounded edge at $ x_i $ --- so by Construction
  \ref{constr-m0n} the image point under $ q_n \circ d: \calM'_{0,n} \to Q_n $
  allows to reconstruct the combinatorial type of the graph as well as the
  lengths of all edges not adjacent to the markings, whereas the full vector in
  $ \RR^{\binom n2} $ then allows to reconstruct the lengths of the ends as
  well.

  Note that the combinatorial types of these curves with bounded ends are in
  one-to-one correspondence with the types of curves with unbounded ends, and
  that $d$ maps $ \calM'_{0,n} $ to a partially open polyhedral complex (with
  cones in bijection to the combinatorial types), which by abuse of notation we
  will also denote by $ \calM'_{0,n} $. Taking again the lattice in $
  \RR^{\binom n2} $ generated by all graphs with integer lengths, and giving
  all cells of $ \calM'_{0,n} $ weight $1$, we get in fact a partially open
  tropical variety in $ \RR^{\binom n2} $ of dimension $ 2n-3 $. It has
  lineality space $ Q_n $ in the sense of Definition \ref{def-lineality}, and
  we have $ \calM'_{0,n} / Q_n = \calM_{0,n} $ as in Corollary
  \ref{cor-quotients}.
\end{construction}

\begin{construction}[Parametrized curves in $ \RR^N $]
    \label{constr-curve}
  A \df{(rational, parametrized) curve in} $ \RR^N $
  \cite[Definition 4.1]{GKM07} is a tuple $ (\Gamma,x_1,\dots,x_n,h) $, with $
  (\Gamma,x_1,\dots,x_n) $ a rational $n$-marked abstract tropical curve and $
  h: \Gamma \to \RR^N $ a continuous map satisfying:
  \begin{enumerate}
  \item On each edge $e$ of $ \Gamma $, with metric coordinate $t$, the map $h$
    is of the form $ h(t) = a + t \cdot v $ for some $ a \in \RR^N $ and $ v
    \in \ZZ^N $. If $ V \in e $ is a vertex and we choose $t$ positive on $e$,
    the vector $v$ will be denoted $v(e,V)$ and called the \df{direction} of
    $e$ (at $V$). If $e$ is an end and $t$ pointing in its direction, we write
    $v$ as $v(e)$.
  \item For every vertex $V$ of $ \Gamma $ we have the \df{balancing condition}
      \[ \sum_{e: V \in e} v(e,V) = 0. \]
  \end{enumerate}
  Two such curves in $ \RR^N $ are called isomorphic (and will be identified)
  if there is an isomorphism of the underlying abstract $n$-marked curves
  commuting with the maps to $ \RR^N $.

  The \df{degree} of a curve $ (\Gamma,x_1,\dots,x_n,h) $ in $ \RR^N $ as above
  is the $n$-tuple $ \Sigma = (v(x_1),\dots,v(x_n)) \in (\ZZ^N)^n $ of
  directions of its ends. We denote the space of all curves in $ \RR^N $ of a
  given degree $\Sigma$ by $ \mm {\RR^N}{\Sigma} $.

  Note that $ \Sigma $ may contain zero vectors, corresponding to contracted
  ends that can be thought of as marked points in the algebraic setting (see
  Construction \ref{constr-ev}).

  If $ n \ge 3 $ and there is at least one end $ x_i $ with $ v(x_i)=0 $ we can
  use the bijection
    \[ \mm {\RR^N}{\Sigma} \to \calM_{0,n} \times \RR^N, \quad
       (\Gamma,x_1,\dots,x_n,h) \mapsto ((\Gamma,x_1,\dots,x_n), h(x_i)) \]
  (which forgets $h$ except for its image on $ x_i $) to give it the structure
  of a tropical variety \cite[Proposition 4.7]{GKM07}.
\end{construction}

\begin{construction}[Evaluation maps] \label{constr-ev}
  For each $i$ with $ v(x_i)=0 $ there is an \emph{evaluation map} 
    \[ \ev_i : \mm {\RR^N}{\Sigma} \rightarrow \RR^N \]
  assigning to a tropical curve $(\Gamma,x_1,\dots,x_n,h)$ the position
  $h(x_i)$ of its $i$-th marked end (note that this is well-defined since the
  marked end $x_i$ is contracted to a point). By \cite[Proposition 4.8]{GKM07},
  these maps are morphisms of tropical fans.
\end{construction}

\begin{construction}[Parametrized curves in $ \RR^N $ with bounded ends]
    \label{constr-m0nird}
  Constructions \ref{constr-m0nb} and \ref{constr-curve} can obviously be
  combined to obtain moduli spaces $ \mmp {\RR^N}{\Sigma} $ of curves in $
  \RR^N $ with bounded ends, as pull-backs under the quotient maps that forget
  the lengths of the bounded ends. They are partially open tropical varieties
  and admit evaluation maps to $ \RR^N $ as in Construction \ref{constr-ev} at
  all ends (\ie not just at the contracted ones).
\end{construction}

\begin{remark}[Parametrized curves in $ \RR^N $ with few markings]
    \label{rem-m0nird}
  In the following, we will also need the moduli spaces $ \mm {\RR^N}{\Sigma} $
  of parametrized curves $ (\Gamma,x_1,\dots,x_n,h) $ for the special cases
  when $ n<3 $ or there is no $ x_i $ with $ v(x_i) = 0 $, so that the above
  bijection with $ \calM_{0,n} \times \RR^N $ is not available. In order to
  overcome this technical problem and still give $ \mm {\RR^N}{\Sigma} $ the
  structure of a tropical variety, there are two possibilities:
  \begin{enumerate}
  \item \label{rem-m0nird:a}
    One can use \emph{barycentric coordinates}, taking a certain weighted
    average of the vertices of the curve.
  \item \label{rem-m0nird:b}
    One can combine evaluation maps at several non-contracted ends, where at
    each such end the evaluations are only taken modulo the direction of the
    edge to make them well-defined.
  \end{enumerate}
  Details on these alternative construction can be found in
  \cite[Section 1.2]{Och13}. In the following, we will just assume that $ \mm
  {\RR^N}{\Sigma} $ has the structure of a tropical variety in any case.
\end{remark}

\begin{remark}[Degree of the canonical divisor on curves]
    \label{rem-degree-kx}
  As in Example \ref{ex-smooth}, let $ X \subset \RR^N $ be a smooth rational
  curve or a smooth hypersurface. Consider an $n$-marked curve $
  (\Gamma,x_1,\dots,x_n,h) \in \mm {\RR^N}{\Sigma} $ with $ h(\Gamma) \subset X
  $, so that $h$ can also be viewed as a morphism from the (abstract) tropical
  curve $ \Gamma $ to $X$.

  We can then consider $ K_X $ as a divisor on $X$ as in Construction
  \ref{constr-divisor} and compute its pull-back $ h^* K_X $ according to
  Remark \ref{rem-push-pull}. Its degree (as in Construction
  \ref{constr-divisor} \refx{constr-divisor:b}) depends only on $X$ and $
  \Sigma $, and not on $h$:
  \begin{enumerate} \parindent 0mm \parskip 1.3ex plus 0.5ex minus 0.3ex
  \item \label{rem-degree-kx:a}
    If $X$ is a rational curve note that any two points in $X$ are
    rationally equivalent divisors in the sense of \cite{AR07}. Hence the
    degree of the divisor $ h^* P $ for a point $ P \in X $ does not depend on
    $P$; for a general point $P$ it is just the sum of the weights of the
    direction vectors for all edges of $ \Gamma $ that map some point to $P$
    under $h$. In particular, taking for $P$ a point far out on an unbounded
    edge of $X$, we see that this degree depends only on $ \Sigma $ and not on
    $h$. It will be called the \emph{degree} $ \deg h $ of $h$ and is the
    tropical counterpart of the notion of degree of a morphism between smooth
    curves in algebraic geometry. The degree of $ h^* K_X $ is now just $ \deg
    h \cdot \deg K_X $; in particular by the above it depends only on $ \Sigma
    $ and not on $h$.
  \item \label{rem-degree-kx:b}
    If $X$ is a hypersurface in $ \RR^N $ a local computation shows that $
    K_X = X \cdot X $, and hence by the projection formula the degree of $ h^*
    K_X $ on $ \Gamma $ is the same as the degree of $ h_* \Gamma \cdot X $ on
    $ \RR^N $. But the $1$-cycle $ h_* \Gamma $ in $ \RR^N $ is rationally
    equivalent to its so-called \emph{recession fan}, \ie the fan obtained by
    shrinking all bounded edges to zero length. As this fan is determined by $
    \Sigma $ it follows also in this case that the degree of $ h^* K_X $
    depends only on $ \Sigma $ and not on $h$.
  \end{enumerate}
  We will therefore denote the degree of $ h^* K_X $ also by $ K_X \cdot \Sigma
  $.
\end{remark}

\section{Gluing moduli spaces} \label{sec:gluing}

Throughout this section, let $X\subset \RR^N$ be a smooth rational curve or a
smooth hypersurface as in Example \ref{ex-smooth}. Aiming at enumerative
applications, we want to construct tropical analogues of the algebraic moduli
spaces of stable maps to a variety, or in other words generalizations of the
tropical moduli spaces $ \mm {\RR^N}{\Sigma} $ of Construction
\ref{constr-curve} to other target spaces than $ \RR^N $. The naive approach
would simply be to use the subset
  \[ \mx X\Sigma :=\{(\Gamma,x_1,\dots,x_n,h) \in \mm {\RR^N}{\Sigma} :
     h(\Gamma)\subset X\} \]
of tropical curves mapping to $X$. As $ K_X \cdot \Sigma $ is independent of
the curves in this space by Remark \ref{rem-degree-kx}, the algebro-geometric
analogue tells us that we expect $ \mx X\Sigma $ to be of dimension $ \dim X +
|\Sigma|-3-K_X \cdot \Sigma $ (in fact, this independence is the reason why we
restrict to the curve and hypersurface cases in this paper). However, just as
in the algebraic case, the actual dimension of this space might be bigger, as
the following example shows.

\begin{example} \label{ex-predim}
  Let $ X=L^2_1 $ be the tropical line in $\R^2$ as in Example \ref{ex-linear},
  and consider degree-$2$ covers of $X$, \ie $ \Sigma=(e_0,e_0,e_1,e_1,e_2,e_2)
  $. The following pictures show combinatorial types of tropical curves in $
  \mx X\Sigma $. Edges are labeled with their weight if it is not $1$, and
  their directions in the picture indicate which cell of $X$ they are mapped
  to. The edge drawn with a dashed line is contracted to a point.

  \begin{center} \begin{tikzpicture}
    \draw (0,2)--(1.6,2);
    \draw (1.6,2)--(2.5,3);

    \draw (1.6,2)--(1.6,1.7);
    \draw (1.6,0.8)--(1.6,1.5);

    \draw (0,1.5)--(1.5,1.5);
    \draw (1.5,1.5)--(2.5,2.5);
    \draw (1.5,1.5)--(1.5,0.5);

    \draw[dashed] (0.7,2)--(0.7,1.5);
    \node at (1,.5) {$\alpha_1$};

    \begin{scope}
      \draw (5,2.05)--(6.5,2);
      \draw (5,1.95)--(6.5,2);
      \draw (6.5,2)--(7.5,3);
      \draw (7,2.5)--(7.55,2.9);
      \draw (6.5,2)--(6.5,1);
      \draw (6.5,1)--(6.55,0.5);
      \draw (6.5,1)--(6.45,0.5);
      \node at (6,.5) {$\alpha_2$};
      \node at (6.35,1.4) {\tiny 2};
      \node at (6.7,2.4) {\tiny 2};
    \end{scope}
  \end{tikzpicture}

  \begin{tikzpicture}
    \begin{scope}[xshift=-10cm]
      \draw (10,2.05)--(10.75,2);
      \draw (10,1.95)--(10.75,2);
      \draw (10.75,2)--(11.5,2);

      \draw (11.5,2)--(12.5,3);
      \draw (12,2.5)--(12.55,2.9);

      \draw (11.5,2)--(11.5,1);
      \draw (11.5,1)--(11.55,0.5);
      \draw (11.5,1)--(11.45,0.5);
      \node at (11.35,1.4) {\tiny 2};
      \node at (11.7,2.4) {\tiny 2};
      \node at (11.1,2.15) {\tiny 2};
      \node at (11,.5) {$\alpha_3$};
    \end{scope}

    \begin{scope}[xshift=5cm]
      \draw (0,2.1)--(0.5,2);
      \draw (0,1.9)--(0.5,2);
      \draw (1,2)--(0.5,2);
      \node at (0.75,2.15) {\tiny 2};

      \draw (1,2)--(1.5,2.1);
      \draw (1,2)--(1.4,1.96);
      \draw (1.7,1.93)--(1.6,1.94);		
				
      \draw (1.5,2.1)--(1.5,.6);
      \draw (1.5,2.1)--(2.5,3.1);
		
      \draw (1.7,1.93)--(1.7,.5);
      \draw (1.7,1.93)--(2.7,2.93);	
      \node at (1,.5) {$\alpha_4$};		
    \end{scope}
  \end{tikzpicture} \end{center}

  Note that in $ \alpha_1 $ and $ \alpha_4 $ the lengths of the bounded edges
  are not independent, since in both cases the two horizontal bounded edges
  adjacent to the origin must have the same length. Hence the types $ \alpha_1
  $, $ \alpha_2 $, and $ \alpha_4 $ are described by $2$-dimensional cells in
  the moduli spaces, whereas $ \alpha_3 $ is $3$-dimensional (with $ \alpha_2
  $ as one of its faces). As we expect our tropical moduli space to have
  dimension $2$ (equal to the space of algebraic degree-$2$ covers of $ \PP^1
  $), we see that $ \mx X\Sigma $ has too big dimension.
\end{example}

Our first aim is therefore to define a suitable subset $ \mm X\Sigma $ of $ \mx
X\Sigma $ of the expected dimension. We fix the polyhedral complex structure on
$X$ to be the unique coarsest one (which exists since $X$ is smooth), and
choose the polyhedral complex structure on $ \mx X\Sigma $ as follows.

\begin{notation}[Curves in $X$] \label{not-combtype}
  Let $ (\Gamma,x_1,\dots,x_n,h) \in \mx X\Sigma $. In the following, all
  isolated points of $ h^{-1}(\sigma) $ for a cell $ \sigma \in X $ will be
  considered as (possibly additional $2$-valent) vertices of $ \Gamma $. The
  interior of every edge of $ \Gamma $ then maps to the relative interior of a
  unique cell of $X$, and we include this information in the combinatorial type
  of a curve in $X$.

  For such a combinatorial type $ \alpha $, we denote the set of all curves in
  $ \mx X\Sigma $ of this type by $ \calM(\alpha) $. These are partially open
  polyhedra, and their closures $ \overline{\calM(\alpha)} $ give $ \mx
  X\Sigma $ the structure of a polyhedral complex. We write $\beta\geq\alpha$
  for two combinatorial types with $\overline{\calM(\beta)}\supset
  \calM(\alpha)$, and say in this case that $ \alpha $ is a \emph{face} of $
  \beta $. If in addition $ \beta\neq\alpha $ we call $\beta$ a
  \emph{resolution} of $\alpha$.
\end{notation}

\begin{notation}[Local curves] \label{not-local}
  Let $ C = (\Gamma,x_1,\dots,x_n,h) \in \mx X\Sigma $ be a tropical curve in
  $X$, and fix a vertex $V$ of $ \Gamma $. We can restrict $C$ to the local
  situation around $V$ and obtain the following data, all written with an index
  $V$:
  \begin{enumerate}
  \item \label{not-local:a}
    The collection of all vectors $ v(e,V) $ for $ e \ni V $ as in Construction
    \ref{constr-curve} is called the \emph{local degree} $ \Sigma_V $ of the
    curve at $V$.
  \item \label{not-local:b}
    The \emph{star} of $X$ at $ h(V) $ will be denoted $ X_V $; it is a shifted
    tropical fan.
  \item \label{not-local:c}
    By our assumption on $X$, we know that $ X_V $ is isomorphic to $
    L^{k_V}_{r_V} \times \R^{m_V} $ for unique $ k_V,r_V,m_V $ (where $ 0 < r_V
    < k_V $ unless $ (k_V,r_V)=(0,0) $). Note that the degree of the canonical
    divisor as in Remark \ref{rem-degree-kx} then splits up as
      \[ K_X \cdot \Sigma = \sum_V K_{X_V} \cdot \Sigma_V, \]
    with the sum taken over all vertices of $ \Gamma $.
  \item \label{not-local:d}
    Let $ C_V \in \mx {X_V}{\Sigma_V} $ be the curve in $ X_V $ with one vertex
    mapping to $ h(V) $, and unbounded ends of directions $ \Sigma_V $. We
    refer to $ C_V $ as a \emph{local curve}. Its combinatorial type $ \alpha_V
    $ will be called the \emph{trivial combinatorial type} of $V$ in $ \mx
    {X_V}{\Sigma_V} $. We will refer to a resolution of $ \alpha_V $ also as a
    \emph{resolution of $V$}.
  \end{enumerate}
\end{notation}

\begin{definition} \label{def-dim}
  Let $ V \in \Gamma $ be a vertex of a curve $ (\Gamma,x_1,\dots,x_n,h) \in
  \mx X\Sigma $. Using Notation \ref{not-local}, we define
  \begin{enumerate}
  \item \label{def-dim:a}
    the \emph{virtual dimension} of $V$ as
    $ \vdim(V)=\val(V)- K_{X_V} \cdot \Sigma_V +\dim X-3 $,
  \item \label{def-dim:b}
    the \emph{resolution dimension} of $V$ as
    $ \rdim(V)=\val(V)- K_{X_V} \cdot \Sigma_V +r_V-3 $,
  \item \label{def-dim:c}
    the \emph{classification number} of $V$ as
    $ c_V=\val(V)+r_V $.
  \end{enumerate}
\end{definition}

\begin{example} \label{ex-dim}
  Let $ X=L^4_3 $, and consider the combinatorial types $ \alpha > \beta $ of
  curves in $X$ as shown in the picture below. In the type $ \beta $, the
  vertex $V$ is mapped to the origin, and the unbounded ends have
  directions $ e_0+e_1+e_2 $ (with weight $6$), and twice $ e_3 $ and $ e_4 $
  (with weight $3$). The type $ \alpha $ has the same directions of the ends,
  $ V_1 $ and $ V_2 $ map to the origin, and consequently $ V_0 $ to a positive
  multiple of $ e_0+e_1+e_2 $.

  \begin {center} \input {pics/rdim} \end {center}

  We then have $ \vdim(V)=\rdim(V)=-1 $, $ \vdim(V_1)=\rdim(V_1)=\vdim(V_2)=
  \rdim(V_2)=0 $, $ \vdim(V_0)=3 $, and $ \rdim(V_0)=0 $. Moreover, $ c_V = 8
  $, $ c_{V_1} = c_{V_2} = 6 $, and $ c_{V_0} = 3 $.

  The virtual dimension of a vertex $V$ can be thought of as the expected
  dimension of the moduli space of curves in $ X_V \cong L^{k_V}_{r_V} \times
  \R^{m_V} $ of degree $ \Sigma_V $. It includes the dimension of a lineality
  space coming from translations in $ \RR^{m_V} $, which is subtracted from the
  virtual dimension to obtain the resolution dimension (as can be seen in the
  example above for $ V_0 $, which can locally be moved in its $3$-dimensional
  cell spanned by $ e_0,e_1,e_2 $). The classification number of Definition
  \ref{def-dim} \refx{def-dim:c} is a useful number for inductive proofs
  because it is always non-negative and becomes smaller in resolutions, as the
  following lemma shows.
\end{example}

\begin{lemma} \label{lem-classnumber}
  Let $\alpha$ be a resolution of a vertex $V$ of a curve in $X$. Then the
  classification number of every vertex $W$ of $ \alpha $ is smaller than $ c_V
  $.
\end{lemma}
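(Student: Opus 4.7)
The plan is a valence-counting argument. Write $W_1,\dots,W_s$ for the vertices of a curve of type $\alpha$, and let $s_3$ denote the number of those that are ordinary abstract-curve vertices (hence $\ge 3$-valent by Construction \ref{constr-m0n}); the remaining $s-s_3$ are additional $2$-valent cell-crossing vertices as in Notation \ref{not-combtype}.

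First I would argue that $s_3\ge 2$. If the abstract curve of $\alpha$ had only one vertex, then $\alpha$ could differ from $\alpha_V$ only by the insertion of such extra $2$-valent vertices; but in the trivial local curve $C_V$ each unbounded end is a straight ray contained in a single closed cone of $X_V$, so no isolated preimages of cells can occur and no $2$-valent vertex can be inserted. This contradicts $\alpha\ne\alpha_V$.

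Next I would record the basic valence identity
\[
  \sum_{i=1}^{s}\val(W_i) \;=\; \val(V)+2(s-1),
\]
which holds because each of the $\val(V)$ ends of $V$ is attached to exactly one $W_i$ while each of the $s-1$ edge segments in the bounded subtree underlying $\alpha$ contributes $2$. Fixing a vertex $W=W_k$ and bounding $\sum_{j\neq k}\val(W_j)$ from below using $\val(W_j)\ge 3$ for abstract-curve vertices and $\val(W_j)\ge 2$ otherwise, a short computation yields
\[
  \val(W) \;\le\; \val(V)+1-s_3 \;\le\; \val(V)-1
\]
when $W$ is itself $\ge 3$-valent (the last inequality using $s_3\ge 2$). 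When $W$ is $2$-valent, the same conclusion $\val(W)\le\val(V)-1$ follows at once from $\val(V)\ge 3$.

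Finally, by Notation \ref{not-local} the minimal cone of $X_V$ containing $h(W)$ is of the form $\sigma_I\times\RR^{m_V}$ for some $I\subset\{0,\dots,k_V\}$ with $|I|\le r_V$, and hence $X_W\cong L^{k_V-|I|}_{r_V-|I|}\times\RR^{m_V+|I|}$, so that $r_W=r_V-|I|\le r_V$. Combining everything,
\[
  c_W \;=\; \val(W)+r_W \;\le\; (\val(V)-1)+r_V \;<\; c_V.
\]
The only mildly delicate step is the justification of $s_3\ge 2$; everything else reduces to bookkeeping.
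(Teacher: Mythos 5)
Your overall decomposition — bound $\val(W)$ and $r_W$ separately, then add — mirrors the paper's proof, which establishes $\val(W)\le\val(V)$ (via edge contraction) and $r_W\le r_V$ (via face inclusion) and then rules out simultaneous equality. The gap in your proposal is the claim $s_3\ge 2$, which is false in general, and your argument for it is where things go wrong.

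You write that ``if the abstract curve of $\alpha$ had only one vertex, then $\alpha$ could differ from $\alpha_V$ only by the insertion of such extra $2$-valent vertices.'' This omits a third way $\alpha$ can differ from $\alpha_V$: the unique $\ge 3$-valent vertex $W$ may lie in a strictly larger cell of $X_V$ than $\sigma_V$, with no $2$-valent vertices at all. Since the combinatorial type in Notation \ref{not-combtype} records the cell of $X$ to which each vertex and each open edge maps, this already gives $\alpha\ne\alpha_V$. Concretely, take $X_V\cong L^3_2$ (a vertex of a smooth surface in $\RR^3$) and $\Sigma_V=(e_0,e_0,e_1,e_1,e_2,e_2,e_3,e_3)$. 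The type $\alpha$ whose single vertex $W$ sits at $p=se_0$ in the interior of the ray $\langle e_0\rangle$, with the eight unbounded ends in the unchanged directions, is a legitimate curve in $X_V$ (each ray $p+\RR_{\ge 0}e_i$ lies in the cone $\langle e_0,e_i\rangle$), its closure contains the trivial type, and it has $s_3=1$ with no $2$-valent vertices. So $s_3\ge 2$ fails, and then your valence bound only gives $\val(W)\le\val(V)$, not $\val(W)\le\val(V)-1$, while you only have $r_W\le r_V$ (not strict) from the last paragraph; as written this does not yield $c_W<c_V$.

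The way to close the gap is essentially what the paper does: after proving $\val(W)\le\val(V)$ and $r_W\le r_V$, assume both are equalities, deduce that $\alpha$ agrees with $\alpha_V$ after deleting $2$-valent vertices \emph{and} that $W$ lies in the cell $\sigma_V$, observe that then every edge from $W$ is a ray from the apex of the fan and so stays in a single cone (hence no $2$-valent vertex can occur), and conclude $\alpha=\alpha_V$, a contradiction. Your observation about rays from the apex staying in one closed cone is exactly the right ingredient; it just needs to be invoked only under the additional hypothesis $r_W=r_V$ (i.e.\ $W$ at the apex), rather than to support the unconditional claim $s_3\ge 2$. In the case $s_3=1$ with $W$ in a strictly larger cell, the strict inequality must come from $r_W<r_V$, a case your proposal never treats.
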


\begin{proof}
  Note first that $ c_W \le c_V $ since both summands in the definition of the
  classification number cannot get bigger when passing from $V$ to $W$:
  \begin{enumerate}
  \item \label{lem-classnumber:a}
    As $ \alpha $ is a tree, shrinking an edge to zero length will merge
    two vertices into one, whose valence is the sum of the original valences
    minus $2$. In particular, since the trivial combinatorial type of $V$ is
    obtained from $ \alpha $ by a sequence of such processes, we conclude that
    $ \val(V) \ge \val(W) $, with equality if and only if all vertices of $
    \alpha $ except $W$ have valence $2$.
  \item \label{lem-classnumber:b}
    If $W$ is mapped to the relative interior of a cone $ \sigma_W $, we
    have $ r_W = \dim X - \dim \sigma_W $. The analogous statement holds for
    $V$, and hence
      \[ r_V = \dim X - \dim \sigma_V \ge \dim X - \dim \sigma_W = r_W \]
    since $ \sigma_V $ is a face of $ \sigma_W $.
  \end{enumerate}
  If we had equality for both numbers, all vertices $W'$ of $ \alpha $
  except $W$ must have valence $2$ by \refx{lem-classnumber:a}. Moreover, $W$
  and $V$ have to lie in the same cell by \refx{lem-classnumber:b}. Hence $
  \alpha $ is the same combinatorial type as the trivial type $V$ after
  removing all $2$-valent vertices $W'$. But this means that all two-valent
  vertices lie in the interior of an edge that is completely mapped to one cell
  of $X$. As this is excluded by definition, there can actually be no
  $2$-valent vertices. Hence $ \alpha $ is the trivial type $V$, in
  contradiction to $ \alpha $ being a resolution of $V$.
\end{proof}

The idea to construct the desired moduli spaces of curves in $X$ is now as
follows.
\begin{enumerate}
\item \label{idea:a}
  Vertices of negative resolution dimension should not be admitted, since
  they correspond (locally, and modulo their lineality space) to an algebraic
  moduli space of curves of negative virtual dimension. We will exclude them in
  Definition \ref{def-mx}. For the case of curves, this corresponds to the
  Riemann-Hurwitz condition as \eg in \cite{BBM10,Cap12,CMR14,BM13}.
\item \label{idea:b}
  Vertices of resolution dimension $0$ correspond (again locally and modulo
  their lineality space) to a $0$-dimensional algebraic moduli space.
  Hence the curves in the corresponding tropical moduli spaces should not allow
  any resolutions, \ie these moduli spaces will consist of only one cell, whose
  weight is the degree of the algebraic moduli space.

  In this paper, we will only consider the tropical situation. The weights of
  the cells for vertices of resolution dimension $0$ are therefore considered
  as initial input data for our constructions, as in Definition
  \ref{def:mod-data} below.
\item \label{idea:c}
  Vertices of positive resolution dimension will lead to curves that allow
  resolutions, and whose moduli spaces therefore consider of several cells.
  These spaces can be obtained recursively by gluing from the initial ones of
  \refx{idea:b} using Construction \ref{con:gluing}, so that no additional
  input data is required for these cases.

  However, as for this paper the initial weights of \refx{idea:b} are arbitrary
  numbers a priori, they need to satisfy some compatibility conditions in order
  for the gluing process to lead to a well-defined space. These conditions are
  encoded in the notion of a good vertex in Definition \ref{def:good}. They are
  originally formulated in every resolution dimension; we will see in Corollary
  \ref{cor:good1dim} however that only the conditions in resolution dimension
  $1$ are relevant since the others follow from them.

  In this recursive construction of the moduli spaces, the following example
  shows that the resolution dimension is not necessarily strictly increasing.
  We will therefore use the classification number for these purposes.
\end{enumerate}

\begin{example} \label{ex-recursive}
  Consider again two combinatorial types $ \alpha > \beta $ as in Example
  \ref{ex-dim}, however in $ X=L^3_2 $ and with directions as indicated in the
  following picture.

  \begin {center} \input {pics/rdim2} \end {center}

  Then $ \rdim (V) = \rdim (V_0) = \rdim (V_1) = 1 $ and $ \rdim (V_2) = 0 $,
  \ie all resolution dimensions are non-negative and hence will be admitted.
  The weight for the type $ \alpha $ will be defined by a gluing procedure over
  its three vertices in Construction \ref{con:gluing}. However, $ V_1 $ has the
  same resolution dimension as $V$, so that a recursive construction over the
  resolution dimension would not work.
\end{example}

\begin{definition}[The moduli space $ \mm X\Sigma $ as a polyhedral complex]
    \label{def-mx} ~
  \begin{enumerate}
  \item \label{def-mx:a}
    An \emph{admissible} combinatorial type of a curve in $X$ is a
    combinatorial type $\alpha$ such that for all vertices $V$ in $\alpha$ we
    have $\rdim(V)\geq 0$.
  \item \label{def-mx:b}
    We denote by $ \mm X\Sigma $ the polyhedral subcomplex of $ \mx X\Sigma $
    consisting of all closed cells $ \overline {\calM(\alpha)} $ such that
    $ \alpha $ and all its faces are admissible, and
      \[ \qquad\quad
         \dim\calM(\alpha) = \vdim \mm X\Sigma
         := |\Sigma| - K_X\cdot\Sigma +\dim X - 3. \]
    Note that this dimension is just $ \vdim(V) $ if $ \alpha $ is a
    combinatorial type with just one vertex $V$.
  \item \label{def-mx:c}
    The \emph{neighborhood} of a combinatorial type $ \alpha $ in $ \mm X\Sigma
    $ is defined as
      \[ \qquad\quad
         \calN(\alpha) := \bigcup_{\beta\geq\alpha} \calM(\beta), \]
    where the union is taken over all combinatorial types $ \beta \ge \alpha $
    in $ \mm X\Sigma $.
  \end{enumerate}
  In the following, we will apply this definition also to the case of local
  curves as in Notation \ref{not-local}, in order to obtain moduli spaces
  $ \mm{X_V}{\Sigma_V} $.
\end{definition}

Example \ref{ex-dim} shows that faces of admissible types need not be
admissible again, so that the condition of all faces of $ \alpha $ being
admissible in Definition \ref{def-mx} \refx{def-mx:b} is not vacuous.

\begin{definition}[Moduli data] \label{def:mod-data}
  \emph{Moduli data} for a smooth variety $X$ are a collection $(\omega_V)_V$
  of weights in $\Q$ for every vertex $V$ of a curve in $X$ with $\rdim(V)=
  0$. All subsequent constructions and results in this section will depend on
  the choice of such moduli data.
\end{definition}

\begin{construction}[Vertices of resolution dimension $0$] \label{con:rdim0}
  Let $V$ be a vertex of a local curve in $ \mm {X_V}{\Sigma_V} $. As $ X_V
  \cong L^{k_V}_{r_V}\times\R^{m_V} $, the trivial combinatorial type of $V$ in
  $ \mm{X_V}{\Sigma_V} $ has dimension $ m_V $, corresponding to a translation
  along $ \RR^{m_V} $. But $ \vdim(V) = \rdim(V) + m_V $, and thus the trivial
  combinatorial type is maximal in $ \mm{X_V}{\Sigma_V} $ if and only if $
  \rdim(V) = 0 $. In this case, the moduli space $ \mm{X_V}{\Sigma_V} $
  consists of only one cell, and we equip it with the weight $ \omega_V $ from
  our moduli data.
\end{construction}

To define the weights on $ \mm X\Sigma $ in general we need Definition
\ref{def:good} and Construction \ref{con:gluing}, which depend on each other
and work in a combined recursion on the classification number of vertices. The
following definition of a good vertex of a certain classification number thus
assumes that good vertices of smaller classification number have already been
defined. Moreover, for every combinatorial type $ \alpha $ in a local moduli
space $ \mm{X_V}{\Sigma_V} $ all of whose vertices have smaller classification
number and are good it assumes from Construction \ref{con:gluing} below that
there is a \emph{gluing cycle} $ \calZ(\alpha) $ on the neighborhood $
\calN(\alpha) $.

\begin{definition}[Good vertices] \label{def:good}
  Let $V$ be a vertex of a (local) curve in $ \mm {X_V}{\Sigma_V} $, of
  classification number $ c_V $. By recursion on $ c_V $, we define $V$ to be
  a \emph{good} vertex if the following three conditions hold.
  \begin{enumerate}
  \item \label{def:good:a}
    Every vertex of every resolution $ \alpha $ of $V$ in $ \mm{X_V}{\Sigma_V}
    $ (which has classification number smaller than $c_V$ by Lemma
    \ref{lem-classnumber}) is good. (There is then a gluing cycle $
    \calZ(\alpha) $ on $ \calN(\alpha) $ by Construction \ref{con:gluing}).
  \item \label{def:good:b}
    $ \mm {X_V}{\Sigma_V} $ is a tropical cycle with the following weights:
    \begin{itemize}
    \item If $ \rdim(V)=0 $ we equip the unique cell of $ \mm {X_V}{\Sigma_V} $
      with the weight from the moduli data as in Construction \ref{con:rdim0}.
    \item If $ \rdim(V)>0 $ the maximal types $ \alpha $ in $ \mm
      {X_V}{\Sigma_V} $ are not the trivial one, \ie they are resolutions of
      $V$. The weights on the corresponding cells $ \calM(\alpha)=\calN(\alpha)
      $ are then the ones of the gluing cycles $ \calZ(\alpha) $ as in
      \ref{def:good:a}.
    \end{itemize}
  \item \label{def:good:c}
    For every resolution $ \alpha $ of $V$ in $ \mm{X_V}{\Sigma_V} $ and every
    maximal type $ \beta \ge \alpha $ in $ \mm{X_V}{\Sigma_V} $ (which is then
    also a resolution of $V$), the weight of the cell $ \calM(\beta) $ is the
    same in the gluing cycles $ \calZ(\alpha) $ and $ \calZ(\beta) $.
  \end{enumerate}
\end{definition}

\begin{construction}[The gluing cycle $ \calZ(\alpha) $] \label{con:gluing}
  Fix a (not necessarily maximal) combinatorial type $\alpha$ in a moduli space
  $ \mm X\Sigma $ as in the picture below on the left, and assume that all its
  vertices are good. We will now construct a cycle $ \calZ(\alpha) $ of
  dimension $ \vdim \mm X\Sigma $ on the neighborhood $ \calN(\alpha) $. An
  important example of this is when $ \alpha $ is a resolution of a vertex in a
  local moduli space. More technical details on this construction can be found
  in \cite[Construction 1.5.13]{Och13}.

  For each vertex $V$ of $ \alpha $ let $ \sigma_V^\circ $ be the open cell of
  $X$ in which $V$ lies, and let $ X(V) = \bigcup_{\sigma \supset \sigma_V}
  \sigma^\circ \subset X $; it is an open neighborhood of $ \sigma_V $.
  Similarly, for each edge $e$ of $ \alpha $ let $ \sigma_e^\circ $ be the open
  cell of $X$ in which $e$ lies, and set $ X(e) = \bigcup_{\sigma \supset
  \sigma_e} \sigma^\circ $.

  \begin {center} \input {pics/glue} \end {center}

  First we will construct local moduli spaces $ \calM_V $ for each vertex $V$
  of $ \alpha $. As $V$ is good the local moduli space $ \mm{X_V}{\Sigma_V} $
  is a tropical cycle by Definition \ref{def:good} \refx{def:good:b}. Let $
  \mmp{X_V}{\Sigma_V} $ be the corresponding moduli cycle of curves with
  bounded ends, \ie the pull-back of $ \mm{X_V}{\Sigma_V} $ under the quotient
  map that forgets the lengths of the ends as in Construction
  \ref{constr-m0nird}. We denote by
    \[ \calM_V = \{ (\Gamma,x_1,\dots,x_n,h) \in \mmp{X_V}{\Sigma_V} :
       h(\Gamma) \subset X(V) \} \]
  its partially open polyhedral subcomplex of all curves with bounded ends
  that lie entirely in $ X(V) $. A typical element of $ \calM_V $ is a curve
  piece as shown in the picture above on the right. (Note that these pieces
  might also be resolutions of the corresponding vertices.)

  Now we glue these pieces $ \calM_V $ together. For each bounded edge $e$
  of $ \alpha $, joining two vertices $ V_1 $ and $ V_2 $ (as in the picture),
  there are corresponding bounded ends in $ \calM_{V_1} $ and $ \calM_{V_2} $.
  Let
    \[ \ev_e : \prod_V \calM_V \to X(e) \times X(e) \]
  be the evaluation map at these two ends, where the product is taken over all
  vertices $V$ of $ \alpha $. The product
    \[ \prod_e \ev_e^* \Delta_{X(e)} \]
  over all pull-backs of the diagonals $ \Delta_{X(e)} $ along these evaluation
  maps as in Appendix \ref{sec:diagonal} can also be written as the product $
  \prod_e \ev_e^* \Delta_X $ along the extended evaluation maps to $ X \times X
  $ by Remark \ref{rem:pull-smooth}. We will therefore abbreviate it by $ \ev^*
  \Delta_X $; it is a cocycle on $ \prod_V \calM_V $. The support of the cycle
  $ \ev^* \Delta_X \cdot \prod_V \calM_V $ then consists of points
  corresponding to curve pieces that can be glued together to a curve in $X$,
  \ie so that \eg the positions of the dots in the picture above coincide.

  However, these curve pieces still carry the information about the position of
  the gluing points. In order to forget these positions, we take a quotient as
  follows. For each bounded edge $e$ of $ \alpha $ between two vertices $ V_1 $
  and $ V_2 $ as above, there are two vectors $ u_{V_1} $ and $ u_{V_2} $ in
  the lineality spaces of $ \calM_{V_1} $ and $ \calM_{V_2} $, respectively,
  that parametrize the lengths of these ends as in Constructions
  \ref{constr-m0n} and \ref{constr-m0nb}. The vector $ u_e := u_{V_1}-u_{V_2} $
  is then in the lineality space of $ \prod_V \calM_V $, and we denote by $
  L_\alpha $ the vector space spanned by the vectors $ u_e $ for all bounded
  edges $e$ of $ \alpha $. Let $q$ be the quotient map by $ L_\alpha $; the
  quotient $ q \big( \ev^* \Delta_X \cdot \prod_V \calM_V \big) $ then does not
  contain the information about the gluing points any more.

  From this cycle we now obtain an injective morphism that considers the curve
  pieces as a glued curve in $X$. It can be defined as
  \begin{align*}
    f: q \big( \ev^* \Delta_X \cdot \prod_V \calM_V \big)
         &\to \calM'_{0,n}(\RR^N,\Sigma) \\
       \big( (d_{i,j})_{\{i,j\} \in R_V},a^V \big)_V &\mapsto 
      \Bigg( \bigg( \sum_{\{k,l\} \in R_{i,j}} d_{k,l} \bigg)_{\{i,j\} \in R},
      a^{V_0} \Bigg),
  \end{align*}
  where
  \begin{itemize}
  \item $ a^V $ denotes the coordinates of the root vertex in the local moduli
    space $ \calM_V $ for $V$, of which we choose $ V_0 $ as the root vertex in
    $ \calM'_{0,n}(\RR^N,\Sigma) $;
  \item $ d_{i,j} $ denotes the distance coordinates on the moduli spaces, with
    $ R_V $ and $R$ the index sets of all pairs of ends in the local moduli
    spaces $ \calM_V $ and the full moduli space $ \calM'_{0,n}(\RR^N,\Sigma)
    $, respectively;
  \item $ R_{i,j} \subset \bigcup_V R_V $ for $ \{ i,j \} \in R $ is the set of
    all pairs of ends of the local curves in the moduli spaces $ \calM_V $
    that lie on the unique path between the ends $ x_i $ and $ x_j $.
  \end{itemize}
  The push-forward cycle
    \[ \calZ'(\alpha)
       := f_* q\bigg(\ev^* \Delta_X \cdot\prod_V\calM_V\bigg)
       := f_* \Bigg( q\bigg(\ev^* \Delta_X \cdot\prod_V\calM_V\bigg) \Bigg)
       \]
  in $ \calM'_{0,n}(\R^N,\Sigma) $ will be called the \emph{gluing cycle}
  (with bounded ends) for $ \alpha $. Finally, taking the quotient by the
  lineality space corresponding to the ends of $ \alpha $, we obtain a gluing
  cycle $ \calZ(\alpha) $ (with unbounded ends) in $ \calM_{0,n}(\R^N,\Sigma)
  $. The cells of maximal dimension come with a natural weight in this
  construction, which we will call the \emph{gluing weight}. It is not clear a
  priori that this weight is independent of the choice of $\alpha$, but it will
  turn out to be so in Theorem \ref{thm-balancing}.
\end{construction}

\begin{lemma} \label{lem:dimtrop}
  Assume that all vertices occurring in a combinatorial type $ \alpha
  $ in $ \mm X\Sigma $ are good. Then
    \[ \dim \calZ(\alpha)=\dim X + |\Sigma|-3-K_X\cdot\Sigma. \]
\end{lemma}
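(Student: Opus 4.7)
The plan is to track the dimension through each step of Construction \ref{con:gluing}. Let $n_V$ denote the number of vertices of $\alpha$ and $E$ the number of bounded edges. Since $\alpha$ is a tree, $E = n_V - 1$. Moreover, summing valences counts each bounded edge twice and each unbounded end once, giving $\sum_V \val(V) = 2E + |\Sigma|$, and by Notation \ref{not-local}\refx{not-local:c} we have $\sum_V K_{X_V} \cdot \Sigma_V = K_X \cdot \Sigma$.

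First I would compute $\dim \calM_V$. Since $V$ is good, Definition \ref{def:good}\refx{def:good:b} tells us that $\mm{X_V}{\Sigma_V}$ is a tropical cycle of dimension $\vdim(V) = \val(V) - K_{X_V}\cdot\Sigma_V + \dim X - 3$. Passing from $\mm{X_V}{\Sigma_V}$ to $\mmp{X_V}{\Sigma_V}$ via Construction \ref{constr-m0nird} adds the $\val(V)$ lengths of the ends as an extra lineality space, so $\dim \calM_V = \vdim(V) + \val(V)$. Summing over the vertices of $\alpha$ yields
\[
\dim \prod_V \calM_V \;=\; 4E + 2|\Sigma| - K_X\cdot\Sigma + n_V(\dim X - 3).
\]

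Next I would account for the four remaining operations. Each factor $\ev_e^* \Delta_X$ is a cocycle of codimension $\dim X$ (since $\Delta_X \subset X\times X$ has codimension $\dim X$ by the pull-back construction in Appendix \ref{sec:diagonal}), so intersecting with all $E$ of them cuts the dimension by $E\dim X$. The quotient by $L_\alpha$ drops the dimension by $\dim L_\alpha = E$: the generators $u_e = u_{V_1} - u_{V_2}$ for different bounded edges are linearly independent, since the $u_V$ at a vertex $V$ correspond to independent length parameters of distinct bounded ends of $\calM_V$, so different $u_e$ involve coordinates of disjoint end-pairs. The push-forward under $f$ is injective and hence preserves dimension. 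Finally, the quotient from $\calM'_{0,n}(\RR^N,\Sigma)$ to $\calM_{0,n}(\RR^N,\Sigma)$ forgets the $|\Sigma|$ unbounded-end lengths, reducing the dimension by $|\Sigma|$.

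Combining these contributions and using $n_V - E = 1$ gives
\[
\dim \calZ(\alpha)
= 4E + 2|\Sigma| - K_X\cdot\Sigma + n_V(\dim X - 3) - E\dim X - E - |\Sigma|
= \dim X + |\Sigma| - 3 - K_X\cdot\Sigma,
\]
as claimed. The only non-bookkeeping step is the independence of the $u_e$ and the fact that $\ev^*\Delta_X$ drops dimension by the generic amount $E\dim X$; the former is combinatorial (one parameter per bounded end, ends distinct for distinct edges), and the latter is the defining property of the diagonal cocycle on a smooth target, so I expect no substantive obstacle beyond careful bookkeeping.
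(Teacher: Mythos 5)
Your proof is correct and follows essentially the same route as the paper's: track the dimension through each step of Construction \ref{con:gluing} (product of local moduli spaces, cut by the diagonal cocycle, quotient by $L_\alpha$, push forward, pass to unbounded ends) and close with tree combinatorics. The only difference is cosmetic — you use $\sum_V \val(V) = 2E + |\Sigma|$ and $E = n_V - 1$, while the paper uses the equivalent identity $s = |\Sigma| - 2 - \sum_V(\val(V)-3)$.
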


\begin{proof}
  By definition we have
    $$ \calZ'(\alpha)=f_*q\bigg(\ev^*\Delta_{X} \cdot \prod_V\calM_V\bigg). $$
  Let $s$ denote the number of vertices $V$ in $ \alpha $, so that $s-1$ is
  the number of bounded edges. As the push-forward preserves dimensions, we
  only have to compute the dimension of $ q \big( \ev^* \Delta_{X} \cdot
  \prod_V\calM_V \big) $. We have that
  \begin{align*}
    \dim \prod_V\calM_V
    & = \sum_V \left(2\val(V)- K_{X_V} \cdot \Sigma_V +\dim X-3\right) \\
    & = s\dim X-K_X\cdot\Sigma+\sum_V(\val(V)-3)+\sum_V\val(V) \\
    & = s\dim X-K_X\cdot\Sigma+2 |\Sigma|-4+s,
  \end{align*}
  where we used that for a tree the number of vertices equals
  $|\Sigma|-2-\sum_V(\val(V)-3)$. The cycle $ \ev^* (\Delta_{X}) \cdot
  \prod_V \calM_V $ has codimension $(s-1)\dim X$, and taking the quotient
  $q$ eliminates another $s-1$ dimensions. Passing from $ \calZ'(\alpha) $ to $
  \calZ(\alpha) $ reduces the dimension by $|\Sigma|$, so in total we get
  \begin{align*}
    \dim \calZ(\alpha)
    &= s\dim X-K_X\cdot\Sigma+2 |\Sigma|-4+s -(s-1) \dim X - (s-1) -|\Sigma| \\
    &= \dim X + |\Sigma|-3-K_X\cdot\Sigma.
  \end{align*}
\end{proof}

\begin{lemma} \label{lem:lin-space-gluing}
  Assume that all vertices occurring in a combinatorial type $ \alpha
  $ in $ \mm X\Sigma $ are good. If the gluing cycle $ \calZ(\alpha) $ is not
  zero, the support of $ \calZ(\alpha) $ contains $ \calM(\alpha) $ and is
  contained in $ \calN(\alpha) $ (so in particular also in $ \calM_0(X,\Sigma)
  $).
\end{lemma}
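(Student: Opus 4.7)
The plan is to unpack Construction \ref{con:gluing} and identify the points in the support of $\calZ(\alpha)$ directly with gluings of local curves, then translate this correspondence into the two set-theoretic inclusions.

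First I would argue $|\calZ(\alpha)| \subset \calN(\alpha)$. Any $p \in |\calZ(\alpha)|$ arises, after the (injective) pushforward by $f$ and the quotient by $L_\alpha$, from a tuple $(C_V)_V \in \prod_V \calM_V$ satisfying the diagonal conditions $\ev_e(C_{V_1}) = \ev_e(C_{V_2})$ at every bounded edge $e = V_1 V_2$ of $\alpha$. These are precisely the gluing conditions, so the pieces $C_V$ assemble into a single curve $C$ in $\RR^N$ whose image is contained in $\bigcup_V X(V) \subset X$. Its combinatorial type $\beta$ is obtained from $\alpha$ by replacing each vertex $V$ with the local combinatorial type of $C_V \in \mm{X_V}{\Sigma_V}$, so $\beta \geq \alpha$ in the sense of Notation \ref{not-combtype}. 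Every vertex of $\beta$ is a vertex of some $C_V$ and therefore has non-negative resolution dimension, since $\calM_V$ consists of admissible local curves by the goodness hypothesis; thus $\beta$ is admissible. Lemma \ref{lem:dimtrop} together with the purity of $\calZ(\alpha)$ forces the top-dimensional cells of $\calZ(\alpha)$ to satisfy $\dim \calM(\beta) = \vdim \mm X\Sigma$, so such $\beta$ belong to $\mm X\Sigma$ and $p \in \calM(\beta) \subset \calN(\alpha)$.

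For the reverse containment $\calM(\alpha) \subset |\calZ(\alpha)|$, I would observe that any curve of type exactly $\alpha$ decomposes, at each vertex $V$, into its local trivial type $\alpha_V \in \mm{X_V}{\Sigma_V}$; this gives a tuple $(C_V)_V \in \prod_V \calM_V$ that automatically satisfies the diagonal gluing conditions, and whose image under $f$ (followed by the quotient maps) recovers the original curve. Hence $\calM(\alpha)$ lies in the closure $\overline{\calM(\beta)}$ of every maximal cell $\calM(\beta)$ of $\calZ(\alpha)$ with $\beta \geq \alpha$, by the definition of the face relation in Notation \ref{not-combtype}. Since the hypothesis $\calZ(\alpha) \neq 0$ guarantees at least one such maximal $\beta$ carrying non-zero weight, $\calM(\alpha)$ lies in $|\calZ(\alpha)|$.

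The main obstacle will be bookkeeping the combinatorial types across the sequence of operations---intersection with diagonals, quotient by $L_\alpha$, pushforward under $f$, and the final quotient by the lineality space of the ends---and checking that this assignment neither loses nor duplicates combinatorial information. The injectivity of $f$, as built into Construction \ref{con:gluing}, together with Remark \ref{rem-push-pull} and Construction \ref{constr-pull-quot}, makes the set-theoretic descriptions above go through cleanly. The admissibility of each vertex of $\beta$ is inherited from the admissibility built into the local moduli spaces $\mm{X_V}{\Sigma_V}$ via the inductive goodness hypothesis on the vertices of $\alpha$.
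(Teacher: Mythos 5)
Your argument for the containment $|\calZ(\alpha)| \subset \calN(\alpha)$ is essentially the paper's: points in the support arise from tuples of local curves satisfying the set-theoretic gluing conditions, which assemble to curves of combinatorial type $\beta \geq \alpha$; admissibility of $\beta$ is inherited from the local moduli spaces, and the dimension statement of Lemma \ref{lem:dimtrop} forces the relevant types into $\mm X\Sigma$.

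The reverse containment $\calM(\alpha) \subset |\calZ(\alpha)|$, however, has a genuine gap. You derive it purely from the first containment together with the hypothesis $\calZ(\alpha) \neq 0$, asserting that $\calM(\alpha)$ lies in the closure of every maximal cell of $\calZ(\alpha)$. This would be immediate if the maximal cells of $\calZ(\alpha)$ were exactly the cells $\calM(\beta)$ with $\beta \geq \alpha$; but a priori the intersection product $\ev^*\Delta_X \cdot \prod_V \calM_V$ can subdivide these cells, in which case a maximal cell $\tau$ of $\calZ(\alpha)$ with non-zero weight could be a proper subcell of some $\calM(\beta)$ whose closure $\overline\tau$ misses $\calM(\alpha)$ entirely. (Think of $\calM(\beta)$ as a square with $\calM(\alpha)$ a corner vertex: a subcell consisting of the far half of the square has a closure not containing that corner.) The face relation $\overline{\calM(\beta)} \supset \calM(\alpha)$ does not transfer to $\overline\tau$ without knowing $\tau = \calM(\beta)$. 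The paper's proof supplies precisely this missing ingredient: it observes that the rational functions from Construction \ref{con:diagonal} cutting out the diagonals $\Delta_{X(e)}$ all contain the diagonal copy of the lineality space of $X(e)$ in their own lineality space, so their pull-backs are affine linear on the cell of $\prod_V \calM_V$ corresponding to $\alpha$; consequently the iterated divisor does not subdivide this cell, and after push-forward and quotient $\calM(\alpha)$ survives as a face of every top-dimensional cell of $\calZ(\alpha)$ in its neighborhood. Without this linearity argument (or a substitute), your deduction does not go through.
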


\begin{proof}
  In the notation of Construction \ref{con:gluing}, consider the set $
  q^{-1}(f^{-1}(\calZ'(\alpha)) $ of all curve pieces that glue to the
  combinatorial type $ \alpha $. Under each evaluation map $ \ev_e $, this set
  maps to the lineality space of $ \Delta_{X(e)} $ in $ X(e) \times X(e) $. The
  functions used for cutting out the diagonal in Construction
  \ref{con:diagonal} all contain this lineality space in their own lineality
  space, so their pull-backs are linear on the above set and hence do not
  subdivide it. These properties are preserved under push-forward with $f$ and
  taking the quotient $q$, and thus $ \calZ(\alpha) $ contains all of $
  \calM(\alpha) $ if it is non-zero.

  By construction, the other points in the gluing cycle $ \calZ(\alpha) $
  correspond to curves obtained by resolving each vertex of $ \alpha $, and
  thus to resolutions of $ \alpha $. Hence, $ \calZ(\alpha) $ is contained in
  the union of all neighboring cells of $ \calM(\alpha) $, and thus by Lemma
  \ref{lem:dimtrop} in $ \calN(\alpha) $.
\end{proof}

\begin{definition}[The moduli space $ \mm X\Sigma $ as a weighted polyhedral
    complex] \label{def:m0nX}
  Assume that all vertices occurring in curves in $ \mm X\Sigma $ are good.
  By Lemma \ref{lem:dimtrop} and \ref{lem:lin-space-gluing}, each maximal cell
  $ \alpha $ of $ \mm X\Sigma $ is also a maximal cell in the gluing cycle $
  \calZ(\alpha) $. We define the weight of this cell of $ \mm X \Sigma $ to be
  the corresponding gluing weight of Construction \ref{con:gluing}.
\end{definition}

The aim of this section is to show that these weights make $ \mm X\Sigma $
into a tropical variety, \ie balanced, if all vertices of resolution dimension
$1$ are good. Examples can be found in Section \ref{sec-lines} and \cite{GMO}.

\begin{theorem} \label{thm-balancing}
  Assume that all vertices in a combinatorial type $ \alpha $ in $ \mm X\Sigma
  $ are good. If $ \beta $ is a resolution of $ \alpha $ in $ \mm X\Sigma $ (so
  that in particular $ \calN(\beta) \subset \calN(\alpha) $) then the weight of
  every maximal cell in the gluing cycle $ \calZ(\beta) $ in $ \calN(\beta) $
  agrees with the weight of the same cell in $ \calZ(\alpha) $ in $
  \calN(\alpha) $.

  In particular, if $ \alpha $ is of virtual codimension $1$ then the cycle $
  \mm X\Sigma $ with the weights of Definition \ref{def:m0nX} is balanced at $
  \alpha $.
\end{theorem}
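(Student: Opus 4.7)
The plan is to prove the first (main) assertion — equality of the gluing weights on every maximal cell common to $\calZ(\alpha)$ and $\calZ(\beta)$ — and then deduce the balancing statement as a short corollary. By transitivity of the resolution relation, I may assume $\beta$ differs from $\alpha$ by a local resolution at exactly one vertex $V$ of $\alpha$, i.e.\ $V$ is replaced in $\beta$ by a non-trivial resolution $\beta_V$ of the trivial type of $V$ in $\mm{X_V}{\Sigma_V}$, while all other vertices and their local moduli factors $\calM_W$ are unchanged. The general case then follows by iterating this step along a chain of elementary resolutions from $\alpha$ to $\beta$.

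Comparing the two instances of Construction \ref{con:gluing}, the only difference is at the vertex $V$: for $\alpha$ the factor in the product is $\calM_V$, while for $\beta$ the factor is $\prod_{W \in \beta_V} \calM_W$, further cut out by the diagonal pull-backs $\ev_e^*\Delta_X$ over the internal bounded edges $e$ of $\beta_V$ and then pushed forward through the local gluing morphism after quotienting by the internal lineality directions $L^{\mathrm{int}}_{\beta_V}\subset L_\beta$. By Lemma \ref{lem-classnumber} the vertices of $\beta_V$ all have classification number strictly smaller than $c_V$, so the goodness of $V$ (Definition \ref{def:good}\refx{def:good:c}, applied inductively) says that this local construction produces, on every maximal cell indexed by $\gamma_V \geq \beta_V$, exactly the same weight as the cycle $\mm{X_V}{\Sigma_V}$ itself carries there — which is by definition the weight on the corresponding cell of $\calM_V$.

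The remaining step is to push this local identification through the global gluing. Decompose the quotient $q_\beta$ by $L_\beta$ into the quotient $q^{\mathrm{int}}$ by $L^{\mathrm{int}}_{\beta_V}$ followed by the quotient $q_\alpha$ by $L_\alpha$. Lemma \ref{lem:quot-pull} lets the diagonal pull-backs over the edges of $\alpha$ commute with $q^{\mathrm{int}}$, Lemma \ref{lem:quot-push} lets the local push-forward at $V$ commute with $q_\alpha$, and the projection formula (Remark \ref{rem-push-pull}) lets the global diagonal pull-backs be applied after the local push-forward. Substituting the local identification from the previous paragraph into this factored form of $\calZ(\beta)$ produces $\calZ(\alpha)$ on the cells of interest, matching the gluing weights on every maximal cell of $\calZ(\beta)$ with those of $\calZ(\alpha)$.

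For the balancing assertion, note that $\calZ(\alpha)$ is a tropical cycle because it is built from a product of cycles by intersecting with divisors of rational functions that cut out the diagonals (Construction \ref{constr-divisor} and Appendix \ref{sec:diagonal}), a quotient by a lineality space (Corollary \ref{cor-quotients}), and an injective push-forward (Remark \ref{rem-push-pull}) — all of which preserve balancing. Lemma \ref{lem:lin-space-gluing} identifies the maximal cells of $\calZ(\alpha)$ incident to $\calM(\alpha)$ with the maximal types $\beta > \alpha$ of $\mm X\Sigma$, and by the first part their weights coincide with the weights of $\mm X\Sigma$; so balancing of $\calZ(\alpha)$ at the codimension-1 cell $\alpha$ is exactly balancing of $\mm X\Sigma$ at $\alpha$. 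The main obstacle is the bookkeeping in the third paragraph: one must verify that the decomposition $L_\beta = L_\alpha \oplus L^{\mathrm{int}}_{\beta_V}$ is compatible with the saturated-sublattice hypothesis of Lemma \ref{lem:quot-push}, and that the various push-forwards and pull-backs commute at the level of weights without picking up spurious lattice-index multiplicities — this is where the full technical machinery of Section \ref{subsec-tropint} is genuinely used.
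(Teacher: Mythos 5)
Your argument follows the paper's proof in all essentials: you factor the gluing construction for $\calZ(\beta)$ into contributions at each vertex $V$ of $\alpha$, invoke goodness (Definition \ref{def:good}) to identify the locally glued piece at $V$ with $\calM_V$, and then recognize the result as the gluing cycle $\calZ(\alpha)$; the balancing assertion follows exactly as you say from Lemma \ref{lem:lin-space-gluing}. Two small remarks. First, your reduction to one-vertex (elementary) resolutions is harmless but unnecessary: the paper's simultaneous decomposition $\tilde q = \prod_V q_V$, $\tilde f = \prod_V f_V$ handles all resolved vertices at once, and for the vertices $V$ with $J_V = \{V\}$ the factor $\tilde\calM_V$ is just $\calM_V$ trivially, so nothing is gained by the outer induction. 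Second, the commutativity you attribute to Lemma \ref{lem:quot-pull} and the ordinary projection formula is actually the content of Lemma \ref{lem:diag-compat} from Appendix \ref{sec:diagonal}: since $\ev_e^*\Delta_X$ is a \emph{diagonal pull-back} in the sense of Construction \ref{con:diagonal} (itself a composite of rational-function divisors, a quotient, and a push-forward), one needs the dedicated compatibility statements of Lemma \ref{lem:diag-compat}\refx{lem:diag-compat:projform} and \refx{lem:diag-compat:quotients} rather than Lemma \ref{lem:quot-pull}, which only concerns pull-backs of rational functions. With that citation corrected, your third paragraph is exactly the manipulation $(*)$ in the paper.
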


\begin{proof}
  This is basically a straight-forward recuction proof; however, we have to pay
  attention to several intersection-theoretical details.
  We start by describing the gluing cycle $ \calZ'(\beta) $ as in Construction
  \ref{con:gluing}. For every vertex $V$ of $ \alpha $, let $ J_V $ be the set
  of vertices of $ \beta $ that degenerate to $V$ in $ \alpha $, so that $
  \cup_V J_V $ is the set of all vertices of $ \beta $. We denote by $ \EV^*
  \Delta_X $ the product over all pull-backs of the diagonals along evaluation
  maps belonging to the edges of $ \beta $, by $Q$ the quotient map on $ \EV^*
  \Delta_X \cdot \prod_V \prod_{W \in J_V} \calM_W $ forgetting the gluing
  points along the bounded edges, and by $F$ the morphism embedding the
  resulting cycle to $ \mmp X\Sigma $. Then by Construction \ref{con:gluing}
  the gluing cycle $ \calZ'(\beta) $ is given by
    \[ \calZ'(\beta)
       = F_*Q \bigg(\EV^* \Delta_{X} \cdot \prod_V \prod_{W \in J_V}\calM_W
       \bigg), \]
  where the first product is taken over all vertices $V$ of $ \alpha $. We will
  now decompose the maps $Q$, $F$, and $\EV$ into contributions coming from the
  vertices of $ \alpha $ as follows. For each vertex $V$ of $ \alpha $,
  let $ I_V $ be the set of bounded edges of $ \beta $ contracting to $V$ in $
  \alpha $. We denote by $ q_V $ the quotient map that forgets the gluing
  points on these edges, and by $ f_V $ the morphism that embeds the cycle
  $ \prod_{g \in I_V} \ev_g^* \Delta_X \cdot \prod_{W \in J_V} \calM_W $ in the
  local moduli space $ \mmp {X_V}{\Sigma_V} $. Furthermore, denote by $q$ and
  $f$ the quotient and embedding maps for the gluing cycle for $ \alpha $,
  respectively. With $ \tilde q = \prod_V q_V $ and $ \tilde f = \prod_V f_V $
  we can then write
    \[ \calZ'(\beta)
       = f_*q\bigg(\tilde{f}_*\tilde{q}\bigg( \prod_e \ev_e^* \Delta_X
       \cdot \prod_V \prod_{g \in I_V} \ev_g^* \Delta_X
       \cdot \prod_V \prod_{W \in J_V} \calM_W \bigg)\bigg), \]
  by Lemma \ref{lem:quot-push}, with the product over $e$ running over all
  edges of $ \alpha $. By the compatibility of push-forwards and quotient maps
  with diagonal pull-backs (see Lemma \ref{lem:diag-compat}), we may rewrite
  this as
  \begin{align*}
    \calZ'(\beta)
    &= f_*q\bigg( \prod_e \ev_e^* \Delta_X \cdot \tilde f_* \tilde q \bigg(
       \prod_V \prod_{f \in I_V} \ev_f^* \Delta_X
       \cdot \prod_V \prod_{W \in J_V} \calM_W \bigg)\bigg) \\
    &= f_*q\bigg( \ev^* \Delta_X \cdot \prod_V
       \underbrace{{f_V}_* q_V \bigg(
       \prod_{f \in I_V} \ev_f^* \Delta_X
       \cdot \prod_{W \in J_V} \calM_W \bigg)}_{=: \tilde \calM_V} \bigg),
       \tag{$*$}
  \end{align*}
  where $ \ev^* \Delta_X $ is the product over all pull-backs of the diagonals
  along evaluation maps belonging to edges of $ \alpha $. But now all vertices
  $V$ of $ \alpha $ are good by assumption, and therefore by Definition
  \ref{def:good} the weights of all maximal cells in the gluing cycle $ \tilde
  \calM_V $ agree with those of $ \calM_V $. Hence $ \tilde \calM_V $ is an
  open subcycle of $ \calM_V $, and as ($*$) is just the gluing construction
  for $ \alpha $ we conclude that every maximal cell of $ \calZ'(\beta) $ has
  the same weight in $ \calZ'(\alpha) $. Of course, this property is preserved
  when passing to unbounded ends in $ \calZ(\beta) $ and $ \calZ(\alpha) $.

  In particular, if $ \alpha $ is of virtual codimension $1$ and thus $ \beta $
  of virtual codimension $0$ in $ \mm X\Sigma $, we see that the cycle $ \mm
  X\Sigma $ is locally around $ \alpha $ given by the cycle $ \calZ(\alpha) $,
  and thus balanced.
\end{proof}

\begin{corollary} \label{cor-balancing}
  If all vertices that can appear in combinatorial types in $ \mm X\Sigma $ are
  good, then $ \mm X\Sigma $ is a tropical variety of dimension
    \[ \dim \mm X\Sigma = \dim X + |\Sigma|-3-K_X\cdot\Sigma \]
  with the weights of Definition \ref{def:m0nX}.
\end{corollary}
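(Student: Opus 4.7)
The plan is to derive the corollary directly from Theorem \ref{thm-balancing} together with the structural definitions already in place. For the dimension statement, I would observe that Definition \ref{def-mx}(b) was set up precisely so that $ \mm X\Sigma $ is the subcomplex of $ \mx X\Sigma $ consisting of closed cells $ \overline{\calM(\alpha)} $ of dimension $ |\Sigma| - K_X\cdot\Sigma + \dim X - 3 $ whose faces are all admissible; hence $ \mm X\Sigma $ is automatically pure of this dimension by construction. One can cross-check this against Lemma \ref{lem:dimtrop} applied to the gluing cycle $ \calZ(\alpha) $ of any maximal cell $ \alpha $, which recovers exactly the same formula.

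For the balancing, I would assemble the local balancing statements already produced by Theorem \ref{thm-balancing}. Concretely, at every codimension-$1$ cell $ \calM(\alpha) $ of $ \mm X\Sigma $ the combinatorial type $ \alpha $ has virtual codimension $1$, and by hypothesis every vertex occurring in $ \alpha $ is good; the final clause of Theorem \ref{thm-balancing} therefore asserts that $ \mm X\Sigma $, equipped with the weights from Definition \ref{def:m0nX}, is balanced at $ \calM(\alpha) $. Since balancing is by definition a condition to be verified at each codimension-$1$ cell, this completes the proof that $ \mm X\Sigma $ is a tropical variety. Implicit here is the well-definedness of the weights in Definition \ref{def:m0nX}: although the weight of a maximal cell $ \calM(\beta) $ is defined via $ \calZ(\beta) $ alone, Theorem \ref{thm-balancing} applied to any face $ \alpha \leq \beta $ shows it agrees with the weight read off from $ \calZ(\alpha) $, so the local model of $ \mm X\Sigma $ around $ \calM(\alpha) $ coincides, as a weighted polyhedral complex, with the restriction of $ \calZ(\alpha) $ to $ \calN(\alpha) $.

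I expect no serious obstacle in this particular argument, because the genuinely hard step---propagating balancing from the resolution-dimension-$0$ input data through arbitrary resolution dimensions using only the compatibility check in dimension $1$---has already been dispatched by Theorem \ref{thm-balancing} via the intersection-theoretic bookkeeping with diagonal pull-backs, push-forwards along injective morphisms, and quotients by lineality spaces (where the compatibility Lemmas \ref{lem:quot-push} and \ref{lem:diag-compat} play the key role). The present corollary is essentially the packaging step that collects these pointwise balancing statements, together with the purity of dimension, into the single global assertion that $ \mm X\Sigma $ is a tropical variety of the expected dimension.
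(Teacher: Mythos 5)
Your proposal is correct and takes exactly the same route as the paper, which simply says ``Apply Theorem \ref{thm-balancing} to all combinatorial types of codimension $1$.'' You have unpacked the same argument in more detail (purity of dimension from Definition \ref{def-mx}\refx{def-mx:b}, well-definedness of the weights, and balancing at each codimension-$1$ cell via the final clause of Theorem \ref{thm-balancing}), but there is no difference in substance.
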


\begin{proof}
  Apply Theorem \ref{thm-balancing} to all combinatorial types of codimension
  $1$.
\end{proof}

So in order to obtain a tropical variety $ \mm X\Sigma $ from gluing we
will have to show that all vertices are good with respect to our given moduli
data. The following result tells us that we only have to do this in resolution
dimension $1$.

\begin{corollary} \label{cor:good1dim}
  If all vertices $V$ with $\rdim(V)=1$ in a given moduli space are good, then
  all vertices are good. 
\end{corollary}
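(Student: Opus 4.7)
The plan is to argue by strong induction on the classification number $c_V$, using the hypothesis on $\rdim(V)=1$ vertices together with Construction \ref{con:rdim0} to handle the base cases, and Theorem \ref{thm-balancing} combined with Lemma \ref{lem-classnumber} for the inductive step.

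I first dispatch the two base cases. If $\rdim(V)=0$, then $\mm{X_V}{\Sigma_V}$ consists of the single trivial cell, which by Construction \ref{con:rdim0} carries the weight from the moduli data; there are no proper resolutions of $V$ in this moduli space, so conditions (a) and (c) of Definition \ref{def:good} are vacuous and (b) is trivial. If $\rdim(V)=1$, the vertex is good by hypothesis.

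For the inductive step, fix a vertex $V$ with $\rdim(V)\ge 2$ and assume that every vertex $W$ of strictly smaller classification number is good. Condition (a) follows immediately from Lemma \ref{lem-classnumber}, since every vertex of a proper resolution of $V$ has classification number strictly less than $c_V$. For condition (b), note that the codimension of the trivial type in $\mm{X_V}{\Sigma_V}$ equals $\vdim(V)-m_V=\rdim(V)\ge 2$, so every codimension-$1$ cell of $\mm{X_V}{\Sigma_V}$ is a proper resolution of $V$, all of whose vertices are good by (a). Theorem \ref{thm-balancing}, applied to each such codimension-$1$ cell, yields the required balancing with the weights of Definition \ref{def:m0nX}, so $\mm{X_V}{\Sigma_V}$ is a tropical cycle. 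For condition (c), let $\alpha$ be a proper resolution of $V$ and $\beta\ge\alpha$ a maximal type; by (a) all vertices of $\alpha$ are good, so Theorem \ref{thm-balancing} directly gives the agreement of the weight of $\calM(\beta)$ in $\calZ(\alpha)$ and $\calZ(\beta)$.

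The only delicate point is the dimension count in (b): the identity $\vdim(V)-m_V=\rdim(V)$ forces the trivial type to have codimension at least $2$ precisely when $\rdim(V)\ge 2$, which is exactly why the hypothesis on $\rdim(V)=1$ vertices is indispensable — for $\rdim(V)=1$ the trivial cell itself would appear as a codimension-$1$ face and balancing there cannot be reduced to vertices of smaller classification number via Lemma \ref{lem-classnumber}. Once this obstruction is absorbed into the base case, the remainder of the argument is a clean invocation of Theorem \ref{thm-balancing}, whose hypotheses the induction has been arranged to satisfy.
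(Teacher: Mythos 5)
Your proof is correct and takes essentially the same route as the paper: induction on the classification number $c_V$, with $\rdim(V)\in\{0,1\}$ as base cases, and the inductive step for $\rdim(V)\ge 2$ handled by the observation that the trivial type then has codimension $\ge 2$ in $\mm{X_V}{\Sigma_V}$, so Theorem \ref{thm-balancing} together with Lemma \ref{lem-classnumber} yields conditions (a), (b), (c) of Definition \ref{def:good}. Your closing remark explaining why the $\rdim(V)=1$ hypothesis cannot be dispensed with is a useful elaboration but does not alter the argument.
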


\begin{proof}
  Let $V$ be a vertex in a given moduli space, with $ X_V \cong L^{k_V}_{r_V}
  \times \RR^{m_V} $. We will prove the statement of the lemma by induction on
  the classification number $ c_V $.
  
  If $ \rdim(V)=0 $ then $V$ does not admit any resolution in $ \mm X\Sigma $
  by Construction \ref{con:rdim0}, and hence $V$ is good. If $ \rdim(V)=1 $
  then $V$ is good by assumption. We can therefore assume that $ \rdim(V)>1 $.
  By Definition \ref{def-dim} this means that $ \vdim(V) $ is at least by $2$
  bigger than the dimension $ m_V $ of the lineality space in $ \mm
  {X_V}{\Sigma_V} $ coming from translations, and hence all combinatorial types
  $ \alpha $ of virtual codimension $1$ in $ \mm {X_V}{\Sigma_V} $ correspond
  to (non-trivial) resolutions of $V$.

  Condition \refx{def:good:a} of Definition \ref{def:good} of a good vertex now
  follows by induction on $ c_V $, condition \refx{def:good:b} by Theorem
  \ref{thm-balancing} applied to all these cells of virtual codimension $1$,
  and condition \refx{def:good:c} by Theorem \ref{thm-balancing} applied to all
  resolutions $ \alpha $ and maximal cells $ \beta > \alpha $.
\end{proof}

Taking Corollaries \ref{cor-balancing} and \ref{cor:good1dim} together, we thus
see that in order to obtain a well-defined moduli space $ \mm X\Sigma $ we only
have to check that all vertices of resolution dimension $1$ are good.

\section{Moduli spaces of lines in surfaces} \label{sec-lines}

In this section we want to construct the moduli spaces $ \mm X1 $ of lines in a
surface $X\subset\R^3$. By Corollary \ref{cor-balancing}, the dimension of
these spaces is the same as in the classical case, namely $3-\deg X$. So it is
empty for $\deg X>3$, and we obtain a finite number of lines counted with
multiplicities for $\deg X=3$.

Let us consider all possible local situations in such a surface. We want to use
decorations on the graph of the line to describe the vertices, as introduced in
\cite{Vig07}: A bold dot indicates that the line passes through a vertex of $X$
and a bold line indicates that the line passes through an edge of $X$. This
leads to the following combinatorial possibilities.

\begin{center} \begin{tikzpicture}
  \draw (0.2,1)--(1,1);
  \fill (0.6,1) circle (2pt);

  \draw (1.5,1)--(2.1,1);
  \draw (2.5,1.4)--(2.1,1);
  \draw (2.5,0.6)--(2.1,1);
  \fill (2.1,1) circle (2pt);

  \draw (3,1)--(3.6,1);
  \draw (4,1.4)--(3.6,1);
  \draw (4,0.6)--(3.6,1);
  \draw[line width=2pt] (3.6,1.4)--(3.6,0.6);

  \draw (4.5,0.5)--(5.5,1.5);
  \draw (4.5,1.5)--(5.5,0.5);
  \fill (5,1) circle (2pt);

  \draw (6,0.5)--(7,1.5);
  \draw (6,1.5)--(7,0.5);
  \draw[line width=2pt] (6.5,1.4)--(6.5,0.6);

  \draw (7.5,0.5)--(8.5,1.5);
  \draw (8,1)--(8.5,0.5);
  \draw[dashed] (7.5,1.5)--(8,1);
  \draw[line width=2pt] (8,1.4)--(8,0.6);
\end{tikzpicture} \end{center}

Here the difference between the last two decorations is that either one edge of
the line lies on the edge of $X$ (which is indicated by the dashed line in the
last picture), or all edges of the line point into maximal cells of $X$ (which
is indicated by the second picture from the right). Note that the pictures
above do not specify the combinatorial type completely, as there are in general
several possibilities for the directions of the ends.

\begin{remark}[Local degrees] \label{rem:locdeg}
  If $ d = \deg X $ then $ K_X \cdot \Sigma = d $ for the degree $ \Sigma $ of
  a line. This means that every local degree $ K_{X_V} \cdot \Sigma_V $ at a
  vertex $V$ can be at most $ d \le 3 $. Moreover, as $V$ has to be admissible,
  \ie must satisfy $ \rdim(V) \ge 0 $, Definition \ref{def-dim}
  \refx{def-dim:b} implies that $ \val(V) \ge K_{X_V} \cdot \Sigma_V +1 $ for
  the bold dot decorations and $ \val(V) \ge K_{X_V} \cdot \Sigma_V +2 $ for
  the bold line decorations. This leaves us with the following table, which
  lists the resolution dimensions of the possible types, and a name of the type
  in brackets. Impossible types are marked with ``X''. (The case above type (H)
  is excluded since there would have to be a maximal cell of $X$ containing two
  of the four edges, and hence $ K_{X_V} \cdot \Sigma_V $ cannot be $1$.)

  \begin{center} \begin{tabular}{c|c|c|c|c|c|c}
    $ K_{X_V} \cdot \Sigma_V $ &
    \begin{tikzpicture}
      \draw (0.2,1)--(1,1); \fill (0.6,1) circle (2pt);
      \draw[color=white] (0.2,0.6)--(1,0.6);
    \end{tikzpicture} &
    \begin{tikzpicture} \draw (1.5,1)--(2.1,1); \draw (2.5,1.4)--(2.1,1);
    \draw (2.5,0.6)--(2.1,1); \fill (2.1,1) circle (2pt); \end{tikzpicture} &
    \begin{tikzpicture} \draw (3,1)--(3.6,1); \draw (4,1.4)--(3.6,1);
    \draw (4,0.6)--(3.6,1); \draw[line width=2pt] (3.6,1.4)--(3.6,0.6);
    \end{tikzpicture} &
    \begin{tikzpicture} \draw (4.5,0.5)--(5.5,1.5);
    \draw (4.5,1.5)--(5.5,0.5);
    \fill (5,1) circle (2pt);
    \end{tikzpicture} &
    \begin{tikzpicture} \draw (6,0.5)--(7,1.5); \draw (6,1.5)--(7,0.5);
    \draw[line width=2pt] (6.5,1.4)--(6.5,0.6); \end{tikzpicture} &
    \begin{tikzpicture} \draw (7.5,0.5)--(8.5,1.5);
    \draw (8,1)--(8.5,0.5); \draw[dashed] (7.5,1.5)--(8,1);
    \draw[line width=2pt] (8,1.4)--(8,0.6); \end{tikzpicture}
    \\ \hline \hline
    1 & 0 (A)& 1 (B)& 0 (D)& 2 (E)& X &1 (I) \\ \hline
    2 & X& 0 (C)& X& 1 (F)&0 (H)&0 (J) \\ \hline
    3 & X& X& X& 0 (G)& X&X
  \end{tabular} \end{center}
\end{remark}

\begin{construction}[Moduli data for resolution dimension $0$]
    \label{con:surface0}
  For the vertices $V$ of resolution dimension $0$ in Remark \ref{rem:locdeg},
  we have to define moduli data as in Definition \ref{def:mod-data}. We will
  fix this according to the situation in algebraic geometry as follows. Assume
  first that $V$ lies on a vertex of $X$, so that $ X_V \cong L^3_2 $ after an
  integer linear isomorphism. Let $ \Sigma = (\sum_{i=0}^3\alpha_i^je_i)
  _{j=1,\dots,n} $ with $ \alpha_i^j \ge 0 $ be the degree of $ \Sigma $, where
  $ n = \val(V) $.

  We then consider four planes $ H_0,H_1,H_2,H_3 $ in $ \PP^3 $ in general
  position and count rational algebraic stable maps $ (C,x_1,\dots,x_n,f) $
  relative to these planes with intersection profiles $
  (\alpha_i^j)_{j=1,\dots,n} $ at $ H_i $ for all $i$. Their (finite) number
  will be the weight that we assign to the vertex $V$. In more complicated
  cases when this number is infinite, we expect that the corresponding
  (virtual) relative Gromov-Witten invariant would be the correct choice here,
  but for our situation at hand this problem does not occur.

  If $V$ lies on an edge of $X$, we assign a weight to $V$ analogously after
  projecting $ X_V \cong L^2_1 \times \RR $ to $ L^2_1 $.

  Here are two examples:
  \begin{enumerate}
  \item \label{con:surface0:a}
    For type (A) in the table above the only possible degree is $ \Sigma =
    (e_0+e_1,e_2+e_3) $ up to permutations, corresponding to lines in $ \PP^3
    $ passing to the two points $ H_0 \cap H_1 $ and $ H_2 \cap H_3 $. As
    there is exactly one such line, we assign to (A) the weight $1$.
  \item \label{con:surface0:b}
    For type (G) there are several possible degrees; as an example we will
    consider $ \Sigma = (3e_0+2e_1,e_1+e_2,e_2+e_3,e_2+2e_3) $, and thus count
    maps with $ f^* H_0 = 3x_1 $, $ f^* H_1 = 2x_1+x_2 $, $ f^* H_2 =
    x_2+x_3+x_4 $, $ f^* H_3 = x_3+2x_4 $ (in the Chow groups of the
    corresponding $ f^{-1}(H_i) $). Such a map would have to send $ x_1 $ to $
    H_0 \cap H_1 $, and $ x_3 $ and $ x_4 $ to $ H_2 \cap H_3 $. As $ f^* H_0 $
    only contains $ x_1 $ and $ f^* H_3 $ only contains $ x_3 $ and $ x_4 $,
    the curve would have to lie completely over the line through those two
    points. But then $ x_2 $ would have to map to both of these points
    simultaneously, which is impossible. Hence we assign the weight $0$ to this
    type.
  \end{enumerate}
\end{construction}

\begin{remark}[Conditions in resolution dimension 1] \label{rem:surface1}
  As the next step, we have to verify that, with the given moduli data, all
  vertices of resolution dimension $1$ are good. To show the general procedure
  will sketch this here for type (F), more details and the other cases can be
  found in \cite[Section 3.3]{Och13}.

  Note that the rays in this type must satisfy exactly the same linear relation
  as the rays of a tropical line. Also, none of the three possible resolutions
  is allowed to have a bounded edge of higher weight, as this does not occur
  for lines. It is checked immediately that this leaves only the degree
  $ \Sigma = (2e_0+e_1,e_1+e_3,e_2,e_2+e_3) $, up to isomorphism. It is shown
  in the picture below, together with its three resolutions in $X$.

  In order to embed the local moduli space into $\calM_{0,4}\times\R^3$, we
  evaluate the position of $x_3$ in $ \R^2 \cong \R^3/\langle e_2 \rangle $
  with coordinate directions $ e_1 $ and $ e_3 $, and the position of $ x_2 $
  in $ \R \cong \R^3/\langle e_1,e_3 \rangle $ with coordinate direction $e_2$
  (see Remark \ref{rem-m0nird}). Then the rays of the local moduli space are
  spanned by the vectors $ v_{\{1,3\}}+e_1-e_3 $, $ v_{\{1,4\}}+e_3 $, and $
  v_{\{1,2\}}-e_1 $ (in the notation of Construction \ref{constr-m0n}) for the
  three resolutions below, respectively. This is balanced with weights one,
  which are actually the gluing weights.

  \begin {center} \input {pics/vertresol1} \end {center}

  \begin {center} \begin{picture}(0,0)%
\includegraphics{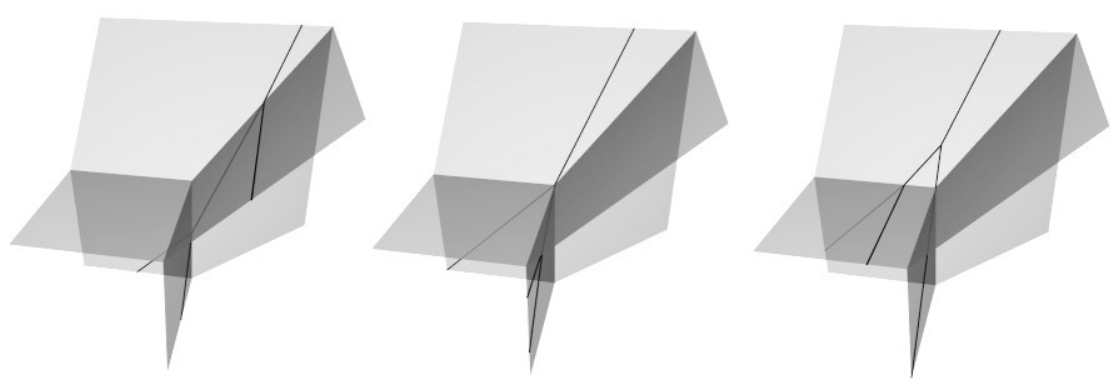}%
\end{picture}%
\setlength{\unitlength}{4144sp}%
\begingroup\makeatletter\ifx\SetFigFont\undefined%
\gdef\SetFigFont#1#2#3#4#5{%
  \reset@font\fontsize{#1}{#2pt}%
  \fontfamily{#3}\fontseries{#4}\fontshape{#5}%
  \selectfont}%
\fi\endgroup%
\begin{picture}(5109,1777)(676,-3683)
\end{picture}%
 \end {center}
\end{remark}

From Corollaries \ref{cor-balancing} and \ref{cor:good1dim} we therefore
conclude:

\begin{corollary} \label{cor-dimlines}
  With the moduli data of Construction \ref{con:surface0}, the moduli space
  $ \mm X1 $ of lines in a tropical surface $ X \subset \RR^3 $ is a tropical
  variety of dimension
    \[ \dim \mm X1 = 3-\deg X \]
  (and empty if $ \deg X > 3 $). In particular, this moduli space consists of
  finitely many (weighted) points if $\deg X=3$.
\end{corollary}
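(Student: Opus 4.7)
The plan is to combine Corollaries \ref{cor-balancing} and \ref{cor:good1dim}, which reduce the entire corollary to verifying that every vertex of resolution dimension $1$ is good in the sense of Definition \ref{def:good}. Consulting the classification table in Remark \ref{rem:locdeg}, the vertices with $\rdim(V)=1$ are exactly those of the three local types (B), (F) and (I). Thus the whole argument becomes a finite combinatorial check over these three types.

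For each of (B), (F) and (I) I would first enumerate the admissible local degrees $\Sigma_V$: in addition to $\rdim\ge 0$, the directions at $V$ must satisfy the linear relation imposed by $X_V$, and no resolution may carry a bounded edge of weight greater than $1$, since lines in $\P^3$ never produce such edges. These constraints leave only a short list in each case. For each such $\Sigma_V$ I would then list the resolutions of $V$ in $\mm{X_V}{\Sigma_V}$, assign to each resulting $0$-dimensional piece the weight coming from the algebraic count of Construction \ref{con:surface0}, and run the gluing recipe of Construction \ref{con:gluing} to produce weights on the top cells of $\mm{X_V}{\Sigma_V}$. What then must be checked are parts \ref{def:good:b} and \ref{def:good:c} of Definition \ref{def:good}: that these weights balance at $V$, and that they agree with the weights assigned via any further resolution. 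Type (F) is carried out explicitly in Remark \ref{rem:surface1}, and (B) and (I) are handled by the same strategy, with details as in \cite{Och13}. Once goodness in resolution dimension $1$ is established, Corollary \ref{cor:good1dim} extends goodness to every vertex, and Corollary \ref{cor-balancing} concludes that $\mm X 1$ is a tropical variety with
\[
  \dim \mm X 1 \;=\; \dim X + |\Sigma| - 3 - K_X\cdot\Sigma.
\]
A tropical line in $\RR^3$ has $|\Sigma|=4$ unbounded ends, and $K_X\cdot\Sigma=\deg X$ by Remark \ref{rem:locdeg}, so the right-hand side collapses to $3-\deg X$. The cases $\deg X>3$ (emptiness) and $\deg X=3$ (zero-dimensional, hence finitely many weighted points) follow immediately.

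The main obstacle is the combinatorial bookkeeping for types (B) and (I). Concretely one has to (i) classify the admissible degrees under the weight-one bounded-edge constraint; (ii) compute the rays of each local moduli space via the evaluation-type coordinates of Remark \ref{rem-m0nird}, where contracted markings are typically unavailable and one must therefore evaluate modulo the edge directions; and (iii) convert the algebro-geometric counts of rational maps to $\P^3$ with prescribed incidence profiles against four general hyperplanes into weights, and verify that the resulting local cycles are balanced. The recursive nature of Definition \ref{def:good} causes no difficulty, since by Lemma \ref{lem-classnumber} the classification number drops strictly under resolution, so the inductive verification terminates in finitely many manageable subcases.
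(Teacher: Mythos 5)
Your proposal is correct and follows the same route as the paper: reduce via Corollaries \ref{cor-balancing} and \ref{cor:good1dim} to checking goodness of the resolution-dimension-$1$ vertices, read off types (B), (F), (I) from the table in Remark \ref{rem:locdeg}, carry out the balancing check as in Remark \ref{rem:surface1} and \cite{Och13}, and then specialize the dimension formula with $\dim X=2$, $|\Sigma|=4$, $K_X\cdot\Sigma=\deg X$. Your remarks on the additional bookkeeping (admissible degrees, evaluation modulo edge directions, translating algebraic counts into weights) accurately reflect the content the paper delegates to Remark \ref{rem:surface1} and \cite[Section~3.3]{Och13}.
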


\begin{example}[Infinitely many lines in a tropical cubic surface \cite{Vig10}]
    \label{ex-infinite}
  Consider a floor-decomposed generic cubic surface where the
  three walls (represented by a line, a conic and a cubic) have the following
  relative position to each other, projected in the $e_3$-direction:

  \begin{center} \begin{tikzpicture}
    \draw (0,1)--(4,1);
    \draw (0,1.5)--(4.25,1.5);
    \draw (0,2.25)--(4.75,2.25);
    \draw (4,0)--(4,1);
    \draw (5,0)--(5,1.25);
    \draw (6.25,0)--(6.25,1.75);
    \draw (4.75,2.25)--(6.5,4);
    \draw (5.5,2)--(7,3.5);
    \draw (6.25,1.75)--(7.5,3);
    \draw (4.25,1.25)--(4.25,1.5);
    \draw (4.25,1.5)--(4.75,2);
    \draw (4.75,2)--(5.5,2);
    \draw (5.5,2)--(5.5,1.75);
    \draw (5.5,1.75)--(5,1.25);
    \draw (5,1.25)--(4.25,1.25);
    \draw (4,1)--(4.25,1.25);
    \draw (4.75,2)--(4.75,2.25);
    \draw (5.5,1.75)--(6.25,1.75);

    \begin{scope}[gray]
      \draw (2,0)--(2,2.75);
      \draw (3,0)--(3,3);
      \draw (0,2.75)--(2,2.75);
      \draw (0,3.5)--(2.25,3.5);
      \draw (2.25,3.5)--(3.875,5.125);
      \draw (3,3)--(4.5,4.5);
      \draw (2.25,3)--(3,3);
      \draw (2.25,3)--(2.25,3.5);
      \draw (2.25,3)--(2,2.75);
    \end{scope}

    \begin{scope}[lightgray]
      \draw (1,4.5)--(0,4.5);
      \draw (1,4.5)--(1,0);
      \draw (1,4.5)--(2,5.5);
    \end{scope}

    \fill (3,2.25) circle (2pt) node[above right] {$V$};
  \end{tikzpicture} \end{center}

  Such a cubic surface contains exactly 27 isolated lines that count with
  multiplicity 1. In addition, it has a $1$-dimensional family of lines not
  containing any of the others. All lines in this family have a vertex mapping
  to the point $V$ shown above, while the rest of them is mapped to maximal
  cells of $X$. General lines in this family are therefore decorated as in the
  following picture.

  \begin{center} \begin{tikzpicture}
    \draw (1.5,1)--(1.1,1.4);
    \draw (1.5,1)--(1.1,0.6);
    \draw (1.5,1)--(2.4,1);
    \draw (2.8,1.4)--(2.4,1);
    \draw (2.8,0.6)--(2.4,1);
    \fill (1.5,1) circle (2pt) node[below right=-1pt] {$V$};
  \end{tikzpicture} \end{center}

  If the vertex mapped to $V$ is $3$-valent, it has to be of resolution
  dimension $-1$. Therefore the only admissible line in this family, \ie the
  only line in the family actually in $\mm X1 $, is the one with no bounded
  edge, \ie with a $4$-valent vertex mapping to $V$. This vertex is then of
  resolution dimension $0$. To determine its type, we map the four rays of the
  cubic at $V$, which are $ -e_1+e_3 $, $ e_1-2e_3 $, $ -e_2-2e_3 $, and $
  e_2+3e_3 $, by an integer linear isomorphism to the four unit vectors.
  This maps the rays of the line to the degree $ \Sigma = (3e_0+2e_1,
  e_1+e_2,e_2+e_3,e_2+2e_3) $. As this line has weight $0$ by Construction
  \ref{con:surface0} \refx{con:surface0:b}, we conclude that the whole family
  does not contribute to the virtual number, and the degree of the $0$-cycle $
  \mm X1 $ is again $27$.
\end{example}

We therefore conjecture:

\begin{conjecture}
  For every smooth cubic surface $ X \subset \RR^3 $ the $0$-cycle $ \mm X1 $
  has degree $27$.
\end{conjecture}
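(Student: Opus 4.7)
The strategy is to combine a deformation-invariance argument with an explicit calculation on one convenient cubic. The space of smooth tropical cubic surfaces is path-connected as an open subset of the space of tropical cubic polynomials; it is decomposed into chambers on which the dual unimodular subdivision (and hence the combinatorial type of $X$) is constant, with walls separating chambers corresponding to local flips. It therefore suffices to (i) exhibit one smooth cubic for which $\deg \mm X1 = 27$, and (ii) show that $\deg \mm X1$ is preserved under any single flip.

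For step (i), I would use the floor-decomposed cubic of Example \ref{ex-infinite}. That example already verifies that the surface carries $27$ isolated lines each of weight $1$, together with a $1$-dimensional family of lines whose unique admissible member is a type-(G) vertex of weight $0$ by Construction \ref{con:surface0} \refx{con:surface0:b}. Hence $\deg \mm X1 = 27$ in this case. Alternatively, one could work with a generic smooth cubic having only isolated lines and verify that each of its $27$ lines appears as a type-(G) vertex of weight $1$, corresponding to the unique algebraic line through the four general planes in the definition of the moduli data.

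For step (ii), the plan is to set up a ``universal'' one-parameter family $X_t$ over a small arc crossing a single wall and construct the corresponding tropical moduli cycle whose fibers recover $\mm{X_t}1$. Away from the wall the family is combinatorially trivial and the degree is manifestly constant. At the wall, the local combinatorial types of lines on $X_t$ rearrange through splittings and gluings of the types listed in Remark \ref{rem:locdeg}, and the degree-preservation check reduces to verifying a balancing-type identity for each local flip of the subdivision of $X$. This can in principle be carried out using the same gluing construction and pull-back of the diagonal as in Construction \ref{con:gluing} and Theorem \ref{thm-balancing}, now applied to vertex data combining the local geometry of $X$ with that of a line on it.

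The main obstacle will be the systematic case analysis for step (ii): a complete proof requires enumerating all local flip moves of unimodular subdivisions of the cubic Newton polytope and checking invariance separately for each, which is combinatorially intricate even though each individual case is mechanical. A conceptually cleaner route, unavailable with the tools developed here, would be a Mikhalkin-style correspondence theorem matching $\mm X1$ with the tropicalization of the algebraic Fano scheme of lines on a cubic surface, which would identify $\deg \mm X1$ directly with the classical count of $27$; however, establishing such a correspondence is beyond the scope of the present paper.
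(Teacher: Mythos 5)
The statement you are attempting to prove is stated in the paper as a \emph{conjecture}, not a theorem: the authors give no proof of it, and the only evidence they offer is the single explicit computation of Example \ref{ex-infinite} (plus the implicit observation that the local weights assigned by Construction \ref{con:surface0} match algebraic counts of lines in $\PP^3$). So there is no argument in the paper against which to compare yours; what you have written is a proposal for settling an open question.

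As a proposal, step (i) is fine --- Example \ref{ex-infinite} already gives one cubic with degree $27$ --- but step (ii) is where the gaps are, and you acknowledge yourself that you have not carried it out. Two of these gaps are worth naming because they are not merely ``combinatorially intricate case analysis.'' First, the claim that the space of smooth tropical cubics is path-connected through a sequence of generic wall-crossings (bistellar flips of unimodular triangulations of $3\Delta_3$, avoiding all higher-codimension strata) is itself a nontrivial assertion about the flip graph of a $3$-dimensional polytope and is not established in the paper, nor is it clearly in the literature; without it the ``prove it for one, deform to all'' scheme does not get off the ground. Second, the paper's machinery builds $\mm X\Sigma$ for a fixed target $X$ via moduli data on local vertex types and gluing cycles; there is no construction of a tropical cycle over a one-parameter family $X_t$ whose fibers would be the $\mm{X_t}1$, and hence no degree-invariance statement to invoke. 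Producing such a relative moduli cycle, and proving its degree is constant across a flip, would require genuinely new constructions beyond Construction \ref{con:gluing} and Theorem \ref{thm-balancing}, which are formulated purely for a single $X$. Your alternative suggestion of a Mikhalkin-style correspondence theorem is indeed the more plausible route, but as you note, that too is outside the scope of what the paper provides. In short: the outline is sensible, but as it stands it does not close the conjecture, and the missing pieces are substantive, not routine.
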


\begin{appendix}

\section{Pulling back the diagonal of a smooth variety} \label{sec:diagonal}

Let $X$ be a partially open tropical cycle, and let $Y$ be a smooth tropical
variety. In order to glue moduli spaces in Section \ref{sec:gluing} we need the
pull-back of the diagonal $ \Delta_Y $ of $Y$ by some morphism $ f: X \to Y
\times Y $. But although the diagonal is locally a product of Cartier divisors
in this case, tropical intersection theory unfortunately does not yet provide a
well defined pull-back for it. This appendix therefore contains the technical
details necessary to construct a well-defined pull-back cycle $ f^*\Delta_{Y}
$.

First we brief\/ly review some facts about matroids and matroid fans from
\cite{FR10}. Let $M$ be a matroid of rank $r$ on a finite ground set $E$. To
every flat $F$ of $M$ we associate a vector $ e_F := \sum_{i\in F} e_i \in \R^E
$, where the $e_i$ are the negative standard basis vectors. Correspondingly, to
every chain of flats $ \emptyset \subsetneq F_1\subsetneq\cdots\subsetneq F_s =
E $ we assign a cone, spanned by $e_{F_1},\dots,e_{F_s}$, and $ -e_{F_s} $. Let
$\BB(M)$ denote the collection of all these cones, where the maximal ones are
equipped with weight $1$. This simplicial fan defines a tropical variety $
\BB(M) $ whose dimension is the rank of $M$. We call it the \emph{matroid fan}
associated to $M$.

Of special interest to us is the \textit{uniform matroid} $ U_{r,k} $ on a
ground set $E$ of cardinality $k$, with rank function $r(A)= \min(|A|,r) $.
Its associated matroid fan is $\BB(U_{r,k})\cong L^{k-1}_{r-1} \times \R$.

\begin{construction} \label{con:ratfuncs}
  By \cite[Section 4]{FR10} the diagonal $ \Delta_{\BB(M)}$ in
  $\BB(M)\times\BB(M)$ can be cut out by a product of rational functions: we
  have $ \Delta_{\BB(M)}=\varphi_1 \cdot\,\cdots\,\cdot \varphi_r \cdot
  (\BB(M)\times\BB(M)) $ for the rational functions $ \phi_i $ linear on the
  cones of $ \BB(M) $ determined by
    \[ \varphi_i(e_A,e_B)=\left\{ \begin{array}{cl}
       -1 & \mbox{if } r_M(A)+r_M(B)-r_M(A\cup B)\geq i, \\
       0 & \mbox{else}
       \end{array}\right. \]
  for flats $A,B$ of $M$, where $ r_M $ is the rank function of $M$. Moreover,
  recursively intersecting with the $\varphi_i$ yields a matroid fan in each
  intermediate step, hence a locally irreducible tropical variety;
  this will be important in the construction. If we want to specify the matroid
  $M$ in the notation, we will write $ \phi_i $ also as $ \phi_i^M $.
\end{construction}

We can now give the construction to pull back the diagonal from a smooth
tropical fan.

\begin{construction} \label{con:diagonal}
  Consider a morphism $ f: X \to Y\times Y$ where $ Y\cong L^{k-1}_{r-1} \times
  \R^m $. Then there is a (non-canonical) isomorphism $ \theta: Y\times\R \to
  \BB(M)\times \R^{m} $, where $M = U_{r,k}$, and $\theta$ maps the additional
  factor $\R$ onto the lineality space of the matroid fan. Associated to $f$ we
  denote by $ \tilde f $ the composition map
    \[ X \times \RR^2
       \;\;\stackrel{f \times \id}{\longrightarrow}\;\;
       Y \times Y \times \RR^2
       \;\;\cong\;\;
       (Y \times \RR) \times (Y \times \RR)
       \;\;\stackrel{\theta \times \theta}{\longrightarrow}\;\;
       \BB(M)^2 \times (\RR^m)^2. \]
  Let $ \psi_1,\dots,\psi_m $ denote functions which cut out the diagonal
  $\Delta_{\R^m}$, and consider the cocycle
    \[ \Phi_Y := \varphi_1 \cdot\,\cdots\,\cdot \varphi_r \cdot
                 \psi_1 \cdot\,\cdots\,\cdot \psi_m
       \qquad \text{on } \BB(M)^2 \times (\R^m)^2, \]
  where $\varphi_i$ are the functions on $ B(M)^2 $ from Construction
  \ref{con:ratfuncs} above. One verifies immediately that the pull-back $
  \tilde f^* \Phi_Y \cdot (X \times \R^2) $ has the lineality space $ L_X :=
  0\times\Delta_\R $ in $ X \times \RR^2 $. So we can take the quotient by $
  L_X $ and use the projection $ p_X: (X \times \R^2)/L \to X $ to define
    \[ f^*\Delta_Y := f^* \Delta_Y \cdot X :=
       {p_X}_* \left[ (\tilde f^* \Phi_Y \cdot (X \times \R^2)) / L_X \right].
       \]
  As the intermediate steps in Construction \ref{con:ratfuncs} are locally
  irreducible, it follows from \cite[Lemma 3.8.13]{Fra12} that the support of $
  \tilde f^* \Phi_Y \cdot (X \times \RR^2) $ lies over the diagonal of $ \RR $
  in $ \RR^2 $ (so that $ p_X $ is injective on $ (\tilde f^* \Phi_Y \cdot (X
  \times \R^2)) / L_X $, in accordance with our convention in Remark
  \ref{rem-push-pull}), and that the support of $ f^* \Delta_Y $ lies in $
  f^{-1}(\Delta_Y) $.

  Moreover, it can be shown that this definition depends neither on the choice
  of $ \psi_1,\dots,\psi_m $ \cite[Theorem 2.25]{Fra11} nor on the choice of
  isomorphism $ \theta $ \cite[Lemma 1.4.3]{Och13}. However, it is not known
  whether it depends on the choice of the rational functions $
  \phi_1,\dots,\phi_r $ cutting out the diagonal of $ \BB(M) $.
\end{construction}

Let us brief\/ly state the main properties of this definition that follow from
the compatibilities between the various intersection-theoretic constructions.

\begin{lemma} \label{lem:diag-compat} ~

  \vspace{-1ex}

  \begin{enumerate}
  \item (Projection formula) \label{lem:diag-compat:projform}
    For two morphisms $ Z\stackrel{g}{\longrightarrow} X
    \stackrel{f}{\longrightarrow}Y\times Y$, where $g$ is injective and $Y$ a
    smooth fan, we have
      \[ g_*\left[(f \circ g)^* \Delta_{Y}\cdot Z\right]
         = f^*\Delta_{Y}\cdot g_* Z. \]
  \item (Quotients) \label{lem:diag-compat:quotients}
    Let $X$ be a partially open tropical variety with lineality space $L$ and
    quotient map $ q: X \to X/L $, and let $ f: X/L \to Y \times Y $ be a
    morphism for a smooth fan $Y$. Then
      \[ q \left( (f \circ q)^* \Delta_Y \cdot X \right)
         = f^* \Delta_Y \cdot (X/L). \]
  \item (Commutativity) \label{lem:diag-compat:comm}
    For two morphisms $ f: X \to Y\times Y$ and $ g: X \to Z\times Z$ to smooth
    fans $Y$ and $Z$ we have
      \[ f^*\Delta_{Y} \cdot \left(g^*\Delta_{Z}\cdot X\right) =
         g^*\Delta_{Z} \cdot \left(f^*\Delta_{Y}\cdot X\right). \]
  \item (Projections) \label{lem:proj}
    Let $X$ and $Z$ be partially open tropical varieties, and denote by $ p: X
    \times Z \to X $ the projection. For any morphism $ f: X \to Y \times Y $
    for a smooth fan $Y$ we have
      \[ (f \circ p)^* \Delta_Y \cdot (X \times Z) = (f^* \Delta_Y \cdot X)
         \times Z. \]
  \item (Products) \label{lem:diag-compat:prod}
    Let $ f: X \to Y \times Y $ and $ f': X' \to \RR^k \times \RR^k $ be two
    morphisms, for a smooth fan $Y$. Then
      \[ (f \times f')^* \Delta_{Y \times \RR^k} \cdot (X \times X')
         = (f^* \Delta_Y \cdot X) \times (f'{}^* \Delta_{\RR^k} \cdot X'). \]
  \end{enumerate}
\end{lemma}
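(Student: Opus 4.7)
The plan is to reduce each of the five assertions to compatibilities among the three atomic operations entering Construction \ref{con:diagonal}, namely: intersection with the rational functions $\varphi_i^M$ and $\psi_j$, the quotient by the lineality space $L_X = 0 \times \Delta_\RR$, and the final push-forward along $p_X$. Each of these satisfies well-known compatibilities: intersection products of rational functions commute (Construction \ref{constr-divisor}); push-forwards obey the projection formula (Remark \ref{rem-push-pull}); and quotients commute with pull-backs of rational functions and with push-forwards of cycles (Lemmas \ref{lem:quot-pull} and \ref{lem:quot-push}). The strategy throughout is: unfold both sides via Construction \ref{con:diagonal}, reorder the operations until they coincide, and observe that the intermediate cocycle $\Phi_Y$ is the same in both expressions.

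For part \refx{lem:diag-compat:projform} I would use that $\widetilde{f\circ g} = \tilde f \circ (g \times \id_{\RR^2})$, so the cocycle appearing in the definition of $(f\circ g)^* \Delta_Y \cdot Z$ is $(g \times \id)^* \tilde f^* \Phi_Y$. Applying the ordinary projection formula to $g \times \id$ pushes the cycle forward to $\tilde f^* \Phi_Y \cdot (g_*Z \times \RR^2)$. The subsequent quotient by $L_X$ commutes with $g_* \times \id$ by Lemma \ref{lem:quot-push} (the lattice hypothesis is automatic because $L_X$ lives in the second factor on which $g \times \id$ is the identity), and the final projection $p_X$ likewise commutes with $g_*$. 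Part \refx{lem:diag-compat:quotients} is parallel: $(f\circ q)^* \Phi_Y = (q \times \id)^* \tilde f^* \Phi_Y$, and Lemma \ref{lem:quot-pull} shows that intersecting with this cocycle commutes with the quotient by $L$; the remaining quotient by $L_X$ and push-forward by $p_X$ commute with $q$ trivially, since $L$ and $L_X$ are independent.

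Part \refx{lem:diag-compat:comm} is the main content. Here both sides unfold to expressions of the form $p_* [\tilde f^* \Phi_Y \cdot \tilde g^* \Phi_Z \cdot (X \times \RR^4)]/L$ for a suitable product lineality space $L$ containing both copies of $\Delta_\RR$ and a suitable projection $p$. Since $\tilde f^* \Phi_Y$ and $\tilde g^* \Phi_Z$ involve only coordinates from disjoint $\RR^2$-factors, they act on disjoint coordinates and so the rational function intersections commute by the commutativity statement at the end of Construction \ref{constr-divisor}. It remains to verify that performing the two quotients and the two projections in either order produces the same result, which is a formal application of Lemmas \ref{lem:quot-pull} and \ref{lem:quot-push}. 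Parts \refx{lem:proj} and \refx{lem:diag-compat:prod} are then formal consequences: \refx{lem:proj} follows from \refx{lem:diag-compat:projform} together with the fact that intersection with rational functions commutes with taking products with $Z$; \refx{lem:diag-compat:prod} rests on the observation that the cocycle $\Phi_{Y \times \RR^k}$ for a product target is, up to reordering coordinates, the product $\Phi_Y \cdot \Phi_{\RR^k}$, so the intersection splits Cartesian-product-wise.

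The main obstacle I anticipate is the bookkeeping for the lineality spaces and the $\RR^2$-extensions introduced by Construction \ref{con:diagonal}, especially in \refx{lem:diag-compat:comm} where two copies of $\RR^2$ and two diagonal lineality spaces must be juggled simultaneously, and one must verify that the conditions of Lemma \ref{lem:quot-push} are met each time a push-forward is pulled past a quotient. No new intersection-theoretic input beyond the lemmas already cited is expected; once the notation is set up cleanly (for instance, by naming the combined cocycle and its lineality space once and for all), each identity reduces to formal rearrangement.
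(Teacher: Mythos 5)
Your proposal is correct and takes essentially the same approach as the paper: unfold both sides via Construction \ref{con:diagonal}, then reduce to formal rearrangement of the pull-back/quotient/push-forward steps using the projection formula, Lemma \ref{lem:quot-push}, and Lemma \ref{lem:quot-pull}. The paper only works out part \refx{lem:diag-compat:projform} in the same way you describe (after first passing $g_*$ past $p_{Z*}$ and the quotient, then applying the projection formula for cocycles) and delegates the remaining parts to the thesis \cite{Och13}; your sketches for the other parts are consistent with that.
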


\begin{proof}
  All these statements can be checked immediately, see
  \cite[Section 1.4]{Och13} for details. As an example, we show part (a): We
  have
  \begin{alignat*} 2
    g_* \left[(f \circ g)^* \Delta_Y \cdot Z \right] 
    &= g_* {p_Z}_* \left[ \big( {\widetilde{f\circ g}}^* \Phi_{Y}
       \cdot(Z\times\R^2) \big) /L_Z \right] \\
    &= {p_X}_* (g\times\id)_* \left[ \big( (g \times \id)^* \tilde f^* \Phi_{Y}
       \cdot(Z\times\R^2) \big) /L_Z \right]
       && \quad \text{(functoriality)} \\
    &= {p_X}_* \left[ \big( (g\times\id)_* (g \times \id)^* \tilde f^* \Phi_{Y}
       \cdot(Z\times\R^2) \big) /L_X \right]
       && \quad \text{(Lemma \ref{lem:quot-push})} \\
    &= {p_X}_* \left[\big( \tilde f^* \Phi_{Y} \cdot(g_*(Z)\times\R^2) \big)
       /L_X \right]
       && \quad \text{(projection formula)} \\
    &= f^*\Delta_{Y} \cdot g_*(Z),
  \end{alignat*}
  where we have used the projection formula for cocycles as in
  \cite[Proposition 2.24 (3)]{Fra11}.
\end{proof}

So far $ Y \cong L^{k-1}_{r-1} \times \RR^m $ was assumed to be a smooth fan.
In order to generalize this to smooth varieties we need the following
compatibility statement. Let $ \sigma $ be a cell of $Y$ in the coarsest
subdivision, with relative interior $ \sigma^\circ $. We consider both $ \sigma
$ and $ \sigma^\circ $ as subsets of the diagonal $ \Delta_Y \subset Y \times Y
$. Let $ Y(\sigma) $ be the union of all open cells in the matroid subdivision
of $ Y \times Y $ whose closure intersects $ \sigma^\circ $, which is then an
open neighborhood of $ \sigma^\circ $ in $ Y \times Y $. It is also contained
in $ Y_\sigma \times Y_\sigma $, where $ Y_\sigma $ is the star of $Y$ at $
\sigma $. As $ Y_\sigma $ is again a smooth fan, we can regard the restriction
of $f$ to $ X_\sigma := f^{-1}(Y(\sigma)) $ also as a morphism $ f_\sigma:
X_\sigma \to Y_\sigma \times Y_\sigma $, and apply Construction
\ref{con:diagonal} to this map. As expected, we will now show that over the
cell $ \sigma $ this gives the same result as for $ f: X \to Y \times Y $.

\begin{lemma}[Compatibility] \label{lem:compat}
  Let $ f : X \to Y \times Y $ be a morphism, with $Y$ a smooth fan. Moreover,
  let $ \sigma $ be a cell in the coarsest subdivision of $Y$. With notations
  as above, the weights of $ f^* \Delta_Y \cdot X $ and $ f_\sigma^*
  \Delta_{Y_\sigma} \cdot X_\sigma $ then agree on all cells of $X$ whose
  interior is mapped by $f$ to the cell $ \sigma^\circ $ in the diagonal $
  \Delta_Y $.
\end{lemma}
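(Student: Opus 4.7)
The plan is to exploit the locality of tropical intersection products and reduce the statement to a combinatorial identity about the rational functions of Construction \ref{con:ratfuncs}. Since the weight of a maximal cell $\tau$ of $f^{*}\Delta_{Y}\cdot X$ with $f(\tau^{\circ})\subset\sigma^{\circ}$ depends only on the stars $X_{\tau}$, $(Y\times Y)_{f(\tau)}$ and the restrictions of the defining rational functions to those stars, and since $(Y\times Y)_{f(\tau)}\subset Y_{\sigma}\times Y_{\sigma}$ by the definition of $Y(\sigma)$, it suffices to show that the rational functions cutting out $\Delta_{Y}$ restrict over $\sigma^{\circ}$ to functions that cut out $\Delta_{Y_{\sigma}}$, up to linear summands (which do not affect the associated Weil divisor).

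To make this precise I would first use Lemma \ref{lem:diag-compat} \refx{lem:diag-compat:prod} to split off the Euclidean factor $\RR^{m}$ from $Y\cong L^{k-1}_{r-1}\times\RR^{m}$ (and similarly $\RR^{m'}$ from $Y_{\sigma}$), reducing to the case $Y=\BB(M)$ for the uniform matroid $M=U_{r,k}$. The cell $\sigma$ then corresponds to a flat $F\subset[k]$ of rank $s<r$ (the case of a longer flag follows by iteration), and the localised matroid $M_{\sigma}=U_{r-s,k-s}$ is again uniform, so $Y_{\sigma}=\BB(M_{\sigma})$ up to the additional lineality already absorbed in the $\RR^{m'}$-factor. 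Because $r_{M}(A)=s+r_{M_{\sigma}}(A\setminus F)$ for every flat $A\supset F$ of the uniform matroid, the explicit formula in Construction \ref{con:ratfuncs} yields
\[ \varphi_{i}^{M}(e_{A},e_{B})
   = \varphi_{i-s}^{M_{\sigma}}(e_{A\setminus F},e_{B\setminus F}) \]
for $A,B\supset F$ and $i>s$, while for $i\le s$ the function $\varphi_{i}^{M}$ is constantly $-1$ on the star and thus linear, contributing trivially to the divisor. Hence the product $\varphi_{1}^{M}\cdots\varphi_{r}^{M}$ restricts on the star to $\varphi_{1}^{M_{\sigma}}\cdots\varphi_{r-s}^{M_{\sigma}}$ up to linear summands, i.e.\ exactly the collection of functions used to cut out $\Delta_{Y_{\sigma}}$.

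Finally, the quotient by $L_{X}=0\times\Delta_{\RR}$ and the push-forward $p_{X}$ appearing in Construction \ref{con:diagonal} are both local operations and therefore commute with restriction to $X_{\sigma}$; assembling everything then yields the desired equality of weights on every cell of $X$ mapping into $\sigma^{\circ}$. The main obstacle will be bookkeeping: identifying $Y_{\sigma}$ in the standard form $L^{k'-1}_{r'-1}\times\RR^{m'}$, matching the matroid-fan coordinates on $Y$ and $Y_{\sigma}$ via the (non-canonical) isomorphism $\theta$ so that the index shift $i\mapsto i-s$ and the change of ground set $[k]\to[k]\setminus F$ become compatible, and verifying that the auxiliary functions $\psi_{1},\dots,\psi_{m}$ cutting out $\Delta_{\RR^{m}}$ match (via Lemma \ref{lem:diag-compat} \refx{lem:diag-compat:prod}) with their counterparts on the enlarged Euclidean factor $\RR^{m'}$; the geometric content itself is entirely contained in the displayed rank identity above.
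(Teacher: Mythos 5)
Your reduction to the uniform matroid and the rank identity $r_M(A)=s+r_{M_S}(A\setminus F)$ for $A\supset F$ are on the right track, and the resulting index shift $\phi_i^M\leftrightarrow\phi_{i-s}^{M_S}$ for $i>s$ is indeed one of the two matchings the paper establishes. However, the argument has a genuine gap.

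You claim that for $i\le s$ the functions $\phi_i^M$ are ``constantly $-1$ on the star and thus linear, contributing trivially to the divisor.'' This is false. On the open set $Y(\sigma)$ from Construction \ref{con:diagonal} the relevant rays are $(e_A,e_B)$ for flats $A,B$ of $M$ with either $A,B\supset S$ or $A,B\subset S$, and on the second class of rays $\phi_i^M$ is not constant: for $A,B\subsetneq S$ the inequality $r_M(A)+r_M(B)-r_M(A\cup B)\ge i$ becomes $|A\cap B|\ge i$, which may or may not hold. For instance, with $s\ge 2$, $A=B=\{1\}$ gives $\phi_2^M(e_A,e_B)=0$, while $A=B=\{1,2\}$ gives $\phi_2^M(e_A,e_B)=-1$. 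So the $s$ functions you discard genuinely bend on $Y(\sigma)$.

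This omission matters: the star $Y_\sigma$ has an additional lineality of dimension $s$ (the affine span of $\sigma$), so cutting out $\Delta_{Y_\sigma}$ requires $s$ more rational functions on the Euclidean side than cutting out $\Delta_Y$. These extra functions have to come from somewhere, and in the paper's proof they come from precisely the $\phi_1^M,\dots,\phi_s^M$ that you dropped: by the same case split (flats below $S$ versus above $S$) one shows that on $Y(\sigma)$ each $\phi_i^M$ with $i\le s$ agrees with the $i$-th function $\phi_i^S$ cutting out the diagonal of the extra $\RR^s$. Without this second matching the diagonal of the $\RR^s$ factor is not cut out at all, and the equality of weights fails. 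A further, smaller issue: invoking Lemma \ref{lem:diag-compat} \refx{lem:diag-compat:prod} to split off the Euclidean factor presupposes that $f:X\to Y\times Y$ is a product of two morphisms on a product $X=X_1\times X_2$, which is not the situation here; the paper sidesteps this by directly comparing the full tuple of rational functions on the rays of $\overline{Y(\sigma)}$.
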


\begin{proof}
  Let $ Y \cong L^{k-1}_{r-1} \times \RR^m $, and let $ M=U_{r,k} $ be the
  corresponding uniform matroid on $ E=\{1,\dots,k\} $, so that $ Y \times \RR
  \cong B(M) \times \RR^m $. The cell $ \sigma $ then corresponds to a subset $
  S \subset E $, i.\,e.\ it consists of all cells in the matroid subdivision
  for chains of flats in $S$. If $ \dim \sigma = s $ then $ Y_\sigma \cong
  L^{k-s-1}_{r-s-1} \times \RR^{m+s} $, or more precisely $ Y_\sigma \times \RR
  \cong B(M_S) \times \RR^s \times \RR^m $, where $ M_S $ is the uniform
  matroid of rank $ r-s $ on $ E \backslash S $.

  As Construction \ref{con:diagonal} is local, it suffices to show that the
  rational functions cutting out the diagonal in this construction are the same
  for the spaces $ Y \times Y \times \RR^2 \cong B(M)^2 \times (\RR^m)^2 $ and
  $ Y_\sigma \times Y_\sigma \times \RR^2 \cong B(M_S)^2 \times (\RR^s)^2
  \times (\RR^m)^2 $ when restricted to the common open neighborhood
  $ Y(\sigma) \times \RR^2 $ of the cell $ \sigma^\circ \times \RR $ in the
  diagonal. For this we need to prove that these rational functions agree on
  all rays of $ \overline{Y(\sigma)} $. As $ (e_S,e_S) $ is the only interior
  ray of $ \sigma^\circ \times \RR $, the maximal cells of $
  \overline{Y(\sigma)} $ correspond to maximal chains of flats in $ M \oplus M
  $ containing $ (S,S) $, and hence we have to compare the rational functions
  of Constructions \ref{con:ratfuncs} and \ref{con:diagonal} on all rays $
  (e_A,e_B) $ for flats $A,B$ of $M$ with $ A,B \subset S $ or $ A,B \supset S
  $.

  The functions cutting out the diagonal of $ \RR^m $ can obviously be chosen
  to be the same in both cases. By Construction \ref{con:ratfuncs}, the others
  are $ \phi_1^M,\dots,\phi_k^M $ for $ B(M)^2 $, and $ \phi_1^{M_S},\dots,
  \phi_{k-s}^{M_S} $ and $ \phi_1^S,\dots,\phi_s^S $ for $ B(M_S)^2 \times
  (\RR^s)^2 $ (where $S$ stands for the uniform matroid of full rank on $S$, so
  that $ \phi_1^S,\dots,\phi_s^S $ can be used to cut out the diagonal of $
  \RR^s $). Now, on the rays $ (e_A,e_B) $ mentioned above \dots
  \begin{itemize}
  \item $ \phi_i^M $ agrees with $ \phi_{i-s}^{M_S} $ for $ i=s+1,\dots,k $:

    If $ A,B \subset S $ then both $ r_M(A) + r_M(B) - r_M(A \cup B)
    \ge i $ and $ r_{M_S}(A \backslash S) + r_{M_S}(B \backslash S) -
    r_{M_S} ((A \cup B) \backslash S) \ge i-s $ are never satisfied.

    If $ A,B \supset S $ then $ r_M(A) = r_{M_S}(A \backslash S) + s $,
    and similarly for $B$ and $ A \cup B $. Hence $ r_M(A) + r_M(B) -
    r_M(A \cup B) \ge i $ is equivalent to $ r_{M_S}(A \backslash S) +
    r_{M_S}(B \backslash S) - r_{M_S} ((A \cup B) \backslash S) \ge i-s $.
  \item $ \phi_i^M $ agrees with $ \phi_i^S $ for $ i=1,\dots,s $:

    If $ A,B \subset S $ then $ r_M(A) + r_M(B) - r_M(A \cup B) \ge i $ is
    equivalent to $ r_S(A \cap S) + r_S(B \cap S) - r_S((A \cup B) \cap S)
    \ge i $.

    If $ A,B \supset S $ then both $ r_M(A) + r_M(B) - r_M(A \cup B) \ge i $
    and $ r_S(A \cap S) + r_S(B \cap S) - r_S((A \cup B) \cap S) \ge i $ are
    always satisfied.
  \end{itemize}
\end{proof}

\begin{remark}[Pullbacks of diagonals of smooth varieties]
    \label{rem:pull-smooth}
  Lemma \ref{lem:compat} implies that we cannot only pull back diagonals of
  smooth fans, but also of smooth varieties: Let $ f : X \to Y \times Y $ be a
  morphism from a partially open tropical cycle to a smooth tropical variety.

  To assign a weight to a cell $ \tau $ of dimension $ \dim X - \dim Y $ over
  the diagonal $ \Delta_Y $, let $ \sigma $ be the cell in $ Y \cong \Delta_Y
  $ so that the relative interior of $ \tau $ maps to the relative interior of
  $ \sigma $. Choose any face $ \sigma' $ of $ \sigma $ (which might be $
  \sigma $ itself), replace $Y$ by the star $ Y_{\sigma'} $ at this face and
  $X$ by the open subcycle of $X$ consisting of all points mapping to $ \sigma'
  $ or any of its adjacent open cells in both components of $ Y \times Y $, and
  assign to $ \tau $ its weight in the cycle $ f_{\sigma'}^*
  \Delta_{Y_{\sigma'}} \cdot X_{\sigma'} $. By Lemma \ref{lem:compat} the
  result does not depend on the choice of $ \sigma' $.
  
  Using the same local procedure for a cell $ \tau $ of dimension $ \dim X -
  \dim Y - 1 $, we obtain a balanced cycle $ f_{\sigma'}^* \Delta_{Y_{\sigma'}}
  \cdot X_{\sigma'} $ including all adjacent cells of dimension $ \dim X - \dim
  Y $. Hence our local construction glues to give a well-defined cycle $ f^*
  \Delta_Y \cdot X $.
\end{remark}

\end{appendix}

\bibliographystyle{amsalpha}
\bibliography{bibliographie}

\end{document}